%
%

\documentclass[aps,pra,showpacs]{revtex4}
\usepackage{epsfig}
\usepackage{amsbsy,latexsym}
\usepackage{amsmath}
\usepackage{amssymb, mathrsfs}
\usepackage[mathscr]{eucal}
\usepackage{hyperref}
\usepackage{amsfonts}
\usepackage{amsthm}
\usepackage{amsmath}
\usepackage{amsfonts}
\usepackage{latexsym}
\usepackage{amssymb}
\usepackage{amscd}
\usepackage[latin1]{inputenc}
\usepackage{verbatim}

\usepackage[english]{babel}


\newtheorem{theorem}{Theorem}[section]
\newtheorem{proposition}[theorem]{Proposition}
\newtheorem{lemma}[theorem]{Lemma}
\newtheorem{corollary}[theorem]{Corollary}

\theoremstyle{definition}
\newtheorem{defn}[theorem]{Definition}

\newtheorem*{remark}{Remark}


\newcommand{\F}{\mathcal{F}}
\newcommand{\B}{\mathcal{B}}

\renewcommand{\H}{\mathcal{H}}
                       
\newcommand{\E}{\mathbb{E}}
\newcommand{\N}{\mathbb{N}}
\newcommand{\R}{\mathbb{R}}
\newcommand{\C}{\mathbb{C}}
\newcommand{\QCE}[3]{\E_{#1}\left[ {#2} | {#3} \right]} 
\newcommand{\QE}[2]{\E_{#1}\left[ {#2}\right]} 
\newcommand{\ch}[1]{\chi_{#1}}
 
\newcommand{\tr}{ \operatorname{Tr} } 
 
\newcommand{\state}[1]{\mathcal{S}(#1)}

\newcommand{\ac}{ \ll_{\rm ac}}
\newcommand{\dd}{\mathrm{d}}
\renewcommand{\d}{\, \mathrm{d}}
\newcommand{\borel}[1]{\mathcal{O}(#1)}
\newcommand{\povm}[2]{\operatorname{POVM}_{#2}({#1})}
\newcommand{\povpm}[2]{\operatorname{POVM}^{1}_{#2}({#1})}

\newcommand{\define}{\emph}
\newcommand{\geo}{\#}
\newcommand{\eps}{\varepsilon}

\newcommand\cstarconv{ {\rm C}^*{\rm conv}}


\begin{document}
\title{Conditional expectation and Bayes' rule for quantum random variables and
positive operator valued measures}

  \author{Douglas Farenick}
  \email{douglas.farenick@uregina.ca}
  \affiliation{Department of Mathematics and Statistics, University of Regina, Regina, Saskatchewan S4S 0A2, Canada}
 
\author{Michael J.~Kozdron}
\email{kozdron@stat.math.uregina.ca}
\affiliation{Department of Mathematics and Statistics, University of Regina, Regina, Saskatchewan S4S 0A2, Canada}

\date{ \today}

\begin{abstract}
A quantum probability measure $\nu$ is a function on a $\sigma$-algebra of subsets of a (locally compact and Hausdorff) sample space
that satisfies the formal requirements for a measure, but where the values of $\nu$ are positive operators acting on a complex Hilbert space, 
and a quantum random variable is a measurable operator valued function. Although quantum probability measures and random variables are
used extensively in quantum mechanics, some of the fundamental probabilistic features of these structures remain to be determined. In this
paper we take a step toward a better mathematical understanding of quantum random variables and quantum probability measures by
introducing a quantum analogue for the expected value $\mathbb E_\nu[\psi]$ of a quantum random variable $\psi$ relative to a quantum
probability measure $\nu$.  In so doing we are led to theorems for a change of quantum measure  and a change of quantum variables. 
We also introduce a quantum conditional expectation  which results in quantum versions of some standard identities for Radon-Nikod\'ym derivatives. 
This allows us to formulate and prove a quantum analogue of Bayes' rule. 
\end{abstract}

\pacs{02.30.Cj, 02.50.Cw, 03.65.Aa, 03.67.-a} 

\maketitle
\section*{Introduction}

The probabilistic aspects of quantum theory have led mathematical and theoretical physicists to consider so-called quantum analogues
of commonly used notions in classical probability theory. By the term ``quantum'' one usually means ``operator valued.'' Thus, in such
language, a quantum probability measure is a function $\nu$ defined on a $\sigma$-algebra $\F(X)$ of subsets of a sample space $X$
such that $\nu$ satisfies the formal requirements of a measure, but where the values of $\nu$ are not nonnegative real numbers, but rather 
they are quantum effects---namely, selfadjoint operators acting on a complex Hilbert space such that, for every $E\in\F(X)$,
the spectrum of the operator $\nu(E)$ is contained in the closed unit interval of $\R$. 

The principal benefit of passing to quantum analogues of classical mathematics and classical physics 
is to be found in the fact that the inherent structure of some object under study may have
properties that are revealed only through the use of quantum methods and are not observed at all through classical methods. There are many such
examples of this approach, such as the theory of operator spaces, which has clarified and enriched Banach space theory, and noncommutative differential 
geometry, which has brought new tools to bear upon our understanding the physical world.

Despite moving from real or complex numbers to selfadjoint or arbitrary Hilbert space operators, one nevertheless wishes the quantum analogue to simultaneously
capture the essence of the classical world and recover the classical world when, in this later situation, the Hilbert space is assumed to be one-dimensional.
In this regard, to be truly meaningful any quantum analogue of a classical theory must overcome two unavoidable features: 
(i) the noncommutativity of operator algebra, and (ii)
the (partial) order structure in the real vector space of selfadjoint operators. To illustrate this point, suppose that $a$ and $b$ are two positive operators
acting on a Hilbert space. If $\H$ has dimension at least $2$, then it is possible that $a$ and $b$ do not commute and in this case neither $ab$ nor $ba$
will be a positive operator, unlike the corresponding situation for real numbers or real-valued
functions; however, $b^{1/2}ab^{1/2}$ and $a^{1/2}ba^{1/2}$ are both positive operators regardless of whether $a$ and $b$ commute. 
(Here $h^{1/2}$ is used to denote the unique positive
square root of a positive operator $h$.)
Indeed, this process of symmetrisation to preserve positivity
will be a recurring technique in our work herein.

In this paper we introduce a quantum analogue of the expected value of a quantum random variable using an operator valued integral introduced and studied
in~\cite{FPS} and~\cite{FZ}, and which has the properties one desires of an expected value, such as linearity, monotonicity, and so forth. 
Furthermore, by introducing a multiplication $\boxtimes$ for the product of a quantum random variable with a quantum Radon-Nikod\'ym derivative, we formulate and prove a quantum analogue of the change of measure theorem. With this result we are then led to establish quantum analogues of some of the fundamental features of the Radon-Nikod\'ym derivative such as quantum conditional expectation and the chain rule. We also establish a formula for a change of quantum variables, a quantum conditional Jensen's inequality, and 
a quantum version of Bayes' rule.

The theory of positive operator valued measures appears in quantum theory via the measurement postulate.
While conditional probabilities for quantum measurements have been considered in~\cite{kl2004},~\cite{SBC} and, more recently, in~\cite{LS2011a}, 
conditional expectations are more problematic. Indeed, quantum formulations of
conditional expectation and Bayes' rule have yet to be settled from the epistemological perspective. 
Conditional expectations in operator algebras, as in Section~9.2 of~\cite{petz}, are fairly natural, but do not necessarily directly
capture probabilistic notions of immediate interest to physicists. 
In this context, there are two questions to 
confront. (i) Does a given formulation of Bayes' rule adhere to the principles of quantum theory? (ii) What is the 
physical or ontological meaning of any mathematically valid formulation of Bayesian inference in quantum theory?
Concerning the second question, the recent literature reveals a substantial body of discussion; we mention here the article~\cite{fuchs--schack2004} as
an entry point into the debate. Concerning the first point, one must keep in mind that obtaining information about a system in a given state
generally alters the state of the system, and so a quantum Bayes' rule must take into account both the updating arising from information about an event and
the disturbing effects of measurement. In this regard, if $\psi(x)\equiv\rho$ is a constant state-valued
quantum random variable on a quantum probability space $(X,\borel X, \nu)$
and if $\mathcal F(X)$ is a sub-$\sigma$-algebra of known events, then we view the quantum conditional expectation $\QCE{\nu}{\psi}{\F(X)}$, as defined in~\S\ref{condexpsect},
as a ``state of belief'' rather than a ``state of nature,''
in keeping with the interpretation of the notion of conditional expectation put forward in~\cite{fuchs--schack2004}.

\section{Quantum conditional expectation}\label{Quantum conditional expectation}

\subsection{Motivating concept: quantum averaging}

A (classical) convex combination $\sum_{j=1}^nt_j\psi(x_j)$
of the values of a function $\psi$ defined on a finite set $X=\{x_1,\dots,x_n\}$ can be viewed as the expected value of $\psi$ relative to the probability
distribution corresponding to the convex coefficients $t_1,\dots, t_n\in[0,1]$. That is, 
\begin{equation}\label{classical avg}
\int_X\psi\,\dd\mu\,=\,\sum_{j=1}^nt_j\psi(x_j)\,,
\end{equation}
where $\mu$ is the unique probability measure on the power set of $X$
for which $\mu(\{x_j\})=t_j$ for each $j$. Observe that here $t_1,\dots,t_n$ are nonnegative real numbers that sum to $1$, but there need not be
any restriction whatsoever on where the values of $\psi$ lie other than that $\psi(x_1),\dots,\psi(x_n)$ be contained in some vector space.

Suppose now that $\H$ is a complex Hilbert space and that $\B(\H)$ is the C$^*$-algebra of bounded linear operators on $\H$. One would like to
consider a quantised formulation of~\eqref{classical avg} so that if the values of $\psi$ lie in $\B(\H)$, then the convex coefficients $t_1,\dots,t_n$ do so
as well. However, because $\B(\H)$ is noncommutative, the sum in~\eqref{classical avg} will not in general result in a positive operator, even if each $t_j$ and $\psi(x_j)$
are positive operators. To ensure preservation of positivity, the summation in~\eqref{classical avg} must be symmetrised---see~\eqref{quantum avg} below.

To this end, let $\B(\H)_+$ denote the cone of positive operators and suppose that the sum of  $h_1,\dots,h_n\in\B(\H)_+$ is the identity operator $1\in\B(\H)$. 
If $\psi:X\rightarrow\B(\H)$ is a function, then the operator
\begin{equation}\label{quantum avg}
\sum_{j=1}^nh_j^{1/2}\psi(x_j)h^{1/2}
\end{equation}
is the \emph{quantum expected value} of $\psi$ is relative to the operator valued probability measure $\nu$ for which $\nu(\{x_j\})=h_j$ for each $j$. The quantum
average~\eqref{quantum avg} preserves positivity; furthermore, it is suggestive and natural to use the notation
\begin{equation}\label{QE discrete}
\QE{\nu}{\psi} = \int_X\psi\,\dd\nu\,=\,\sum_{j=1}^nh_j^{1/2}\psi(x_j)h^{1/2}
\end{equation}
to denote the quantum expected value of $\psi$. 

Although~\eqref{quantum avg} is a generalised convex combination of operators $\psi(x_1),\dots,\psi(x_n)$, this represents a special form of a more general notion of
nonclassical convexity. A 
\emph{C$^*$-convex combination} of operators $z_1,\dots,z_m\in\B(\H)$ is an operator $z$ of the form
\begin{equation}\label{cstarconv comb}
z\,=\,\sum_{j=1}^m a_j^*z_j a_j\,,\;\mbox{ where } a_1,\dots,a_m\in\B(\H)\;\mbox{ are such that }
\sum_{j=1}^m a_j^*a_j\,=\,1\,.
\end{equation}
If $\Gamma\subset\B(\H)$ is a nonempty subset, then the \define{C$^*$-convex hull} of $\Gamma$ is the set
$\cstarconv\,\Gamma$ consisting of all operators $z$ of the form~\eqref{cstarconv comb}, where $m\in\mathbb N$ is
arbitrary and $z_1,\dots,z_m\in\Gamma$. The C$^*$-convex hull is, in general, not a closed set; however,  
it is known~\cite{farenick1992} that  if $\Gamma$ is
compact and the dimension $d$ of $\H$ is finite, then $\cstarconv\,\Gamma$ is compact. In particular, the C$^*$-convex hull of a single operator $z\in\B(\H)$, namely
\begin{equation}\label{cstarconv compact}
\cstarconv\,\{z\}\,=\,\left\{ \sum_{j=1}^m a_j^*z a_j\,:\,m\in\mathbb N,\;\sum_{j=1}^m a_j^*a_j\,=\,1\right\}\,,
\end{equation}
is compact and, as shown in~\cite{farenick1992}, the number $m$ of summands $a_j^*za_j$ required to exhaust $\cstarconv\,\{z\}$ is uniformly bounded above by a
polynomial in the (finite) dimension $d$ of the Hilbert space $\H$.

\subsection{Measure and integration}

Henceforth $X$ shall denote a locally compact Hausdorff space
and $\F(X)$ a $\sigma$-algebra of subsets of $X$. A particular $\sigma$-algebra of interest is the $\sigma$-algebra
of Borel sets of $X$, which is denoted by $\borel{X}$. 

Assume that $\H$ is a separable Hilbert space with canonical trace functional $\tr(\cdot)$.
A \emph{density operator}, or \emph{state}, on $\H$ is a positive trace-class operator $\rho$ such that $\tr(\rho)=1$; the set
of all density operators is denoted by $\state{\H}$.

A function $\nu: \F(X) \to \B(\H)$ is called a \define{positive operator valued measure} on $(X,\F(X))$ if
\begin{enumerate}
\item $\nu(E) \in\B(\H)_+$ for every $E \in \F(X)$,
\item $\nu(X) \neq 0$, and
\item for every countable collection $\{E_k\}_{k \in \N} \subseteq \F(X)$ with $E_j \cap E_k = \emptyset$ for $j \neq k$ we have
$$\nu\left(\bigcup_{k\in \N} E_k \right) = \sum_{k \in \N} \nu(E_k)$$
where the convergence on the right side of the equation above is with respect to the $\sigma$-weak topology of $\B(\H)$.
\end{enumerate}

We will write $\povm{X, \F(X)}{\H}$ for the set of all positive operator valued measures on $(X,\F(X))$ with values in $\B(\H)$. 
In the case that $\F(X)=\borel{X}$, we will drop the $\borel{X}$ from the notation and write $\povm{X}{\H}$. 
By $\povpm{X, \F(X)}{\H}$ we mean those $\nu\in \povm{X, \F(X)}{\H}$ satisfying $\nu(X)=1$ and similarly for $\povpm{X}{\H}$. Such a $\nu$ will often be called either a 
\emph{positive operator valued probability measure} or a \emph{quantum probability measure}.

A function $\psi: X \to \B(\H)$ is said to be \define{$\F(X)$-measurable} if, 
for every state (that is, density operator) $\rho \in\B(\H)$, the complex-valued function $\omega_\rho  :X \to \C$ given by
\[
\omega_\rho(x)\,=\,   \tr(\rho \psi(x))
\]
is $\F(X)$-measurable (in the sense that $\omega_\rho^{-1}(U) \in \F(X)$ for every open set $U \subset \C$).

Each $\nu \in \povm{X, \F(X)}{\H}$ gives rise to
a measure $\mu=\mu_\nu$ on $(X, \F(X))$ via
\begin{equation}\label{induced msr}
\mu (E)\,=\,\tr\left(\nu(E)\right),\;E\in\F(X)\,.
\end{equation}
If $\H$ has finite dimension $d$, then the measure $\mu $ above is assumed to be normalised to
\begin{equation}\label{povmprobmeas}
\mu\,=\,\frac{1}{d}\,\tr\circ\nu 
\end{equation}
so that $\mu$ is a probability measure if $\nu$ is.
As explained in~\cite{FPS}, the
Radon-Nikod\'ym derivative $\dd\nu/\dd\mu$ is a quantum random variable
$X\rightarrow\B(\H)_+$ and admits a measurable positive square root $\left( \dd\nu/\dd\mu\right)^{1/2}$.

\begin{defn}\label{defnofnuintegrable}
If $\psi: X \to \B(\H)$ is a quantum random variable and if $\nu \in \povm{X}{\H}$, 
then $\psi$ is said to be \define{$\nu$-integrable} if for every state $\rho$  the complex-valued function $\psi_\rho :X \to \C$ defined by
\begin{equation}\label{muint}
\psi_\rho(x) = \tr \left(
 \rho \left(\frac{\dd \nu}{\dd \mu}(x) \right)^{1/2} \psi(x) \left(\frac{\dd \nu}{\dd \mu}(x) \right)^{1/2} 
\right)
\end{equation}
is $\mu$-integrable. In this case, the \define{integral of $\psi$ with respect to $\nu$} is the unique operator $\int_X \psi \d \nu$ on $\H$ with 
the property that
$$\tr \left( \rho\int_X \psi \d \nu \right) = \int_X \psi_\rho \d \mu$$
for every state $\rho$.
\end{defn}

\begin{defn}
If $\nu \in \povpm{X}{\H}$, then the
 \define{quantum expectation} (or \define{quantum expected value})  
of $\psi$ relative to the quantum probability measure $\nu$
is the operator denoted by $\QE{\nu}{\psi}$ and defined by
\[
\QE{\nu}{\psi} \,=\,  \int_X \psi\, \d \nu\,.
\]
\end{defn}

\subsection{Properties of quantum expectation}

We focus now on a finite-dimensional Hilbert space $\H$, a compact Hausdorff space $X$, and the $\sigma$-algebra $\borel{X}$ of Borel sets, 
although several aspects of the following theorem remain true for arbitrary 
Hilbert spaces $\H$, locally compact $X$, and arbitrary $\sigma$-algebras $\F(X)$. 

In what follows below, $\chi_E$ denotes the indicator (that is, characteristic) function of $E\in\borel{X}$, $C(X)$ is the abelian C$^*$-algebra of all complex-valued continuous
functions on a compact Hausdorff space $X$, and a unital completely positive linear map $\mathcal E:\B(\H)\rightarrow\B(\H)$ is a  conditional expectation 
if $\mathcal E\circ\mathcal E=\mathcal E$.

\begin{theorem}\label{properties of QE} If
 $\H$ is a Hilbert space of finite dimension $d$, $X$ is compact, $\nu \in \povpm{X}{\H}$, and $\psi,\psi_1,\psi_2:X\rightarrow\B(\H)$
are quantum random variables, then the following assertions hold:
\begin{enumerate}
\item\label{qe1} (Generalised linearity) $\QE{\nu}{\varrho_1\psi_1 + \varrho_2\psi_2} = \varrho_1\QE{\nu}{\psi_1} + \varrho_2\QE{\nu}{\psi_2}$
for all  $\varrho_1$, $\varrho_2 \in \B(\H)$ that commute with the range of 
$\dd \nu/\dd \mu$;
\item\label{ge2} (Monotonicty) $\QE{\nu}{\psi}\in\B(\H)_+$ if $\psi(x)\in\B(\H)_+$ for $\mu$-almost all $x\in X$;
\item\label{qe3} (Additivity) if $E, F \in \borel{X}$ are such that $E \cap F = \emptyset$, then $\QE{\nu}{\chi_{E\cup F}\psi}=\QE{\nu}{\chi_{E}\psi}+\QE{\nu}{\chi_{ F}\psi}$;
\item\label{qe4} (Finitely supported measures) if $\nu$ is supported on a finite set $ \{x_1, \ldots, x_n\}\subset X$, then
$$\QE{\nu}{\psi} = \sum_{j=1}^n h_j^{1/2}\psi(x_j)  h_j^{1/2},$$
where $h_j=\nu(\{x_j\})$ for $j=1,\dots,n$;
\item\label{qe5} (Complete positivity) the map
$\phi_\nu:C(X)\otimes\B(\H)\rightarrow\B(\H)$ defined by
\[
\phi_\nu(f)\,=\,\int_X f\,d\nu
\]
is a unital completely positive linear map;
\item\label{qe6} {\rm (Jensen's inequality)}
if $J\subset\mathbb R$ is an open interval containing a closed interval
$[\alpha,\beta]$, and if
$\psi(x)$ is selfadjoint and has spectrum contained in
$[\alpha,\beta]$
for every $x\in X$, then 
\[
\vartheta\left(\int_X \psi\,d\nu\right)\,\leq\,\int_X\vartheta\circ \psi\,d\nu\,,
\]
for every operator convex function $\vartheta:J\rightarrow\mathbb R$;
\item\label{qe7} (Quantum expectation of constant functions) the linear map
$\mathcal E_\nu:\B(\H)\rightarrow\B(\H)$ defined by
\[
\mathcal E_\nu(z) = \int_X z \d \nu \, , \;\; z\in\B(\H),
\]
is a unital quantum channel---hence, $\QE{\nu}{\rho}$ is a state for every state $\rho$;
\item\label{qe8} (Fixed points) the set $\mathcal A_\nu=\{z\in\B(\H)\,:\,\QE{\nu}{z}=z\}$ is a unital C$^*$-subalgebra of $\B(\H)$;
\item\label{qe9} (Ergodic Property) there exists a trace-preserving conditional expectation $\mathfrak E_\nu:\B(\H)\rightarrow\B(\H)$ with range $\mathcal A_\nu$ such that
\[
\lim_{N\rightarrow\infty}\,\frac{1}{N}\left(\mathcal I+\sum_{j=1}^{N-1}\underbrace{\mathcal E_v \circ \cdots \circ \mathcal E_\nu}_{j}\right)\,=\,\mathfrak E_\nu\,.
\]
\end{enumerate}
\end{theorem}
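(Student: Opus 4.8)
The plan is to establish the nine assertions more or less in the order stated, since several of the later items depend on the earlier ones. I would begin with the three ``soft'' properties. For \eqref{qe1}, I would unwind Definition~\ref{defnofnuintegrable}: if $\varrho_1,\varrho_2$ commute with the range of $\dd\nu/\dd\mu$, then they commute with $(\dd\nu/\dd\mu)^{1/2}$ as well (since the square root is a norm-limit of polynomials in $\dd\nu/\dd\mu$), so one may slide $\varrho_i$ past the symmetrising square roots inside \eqref{muint}, use linearity of the trace and of the scalar $\mu$-integral, and then use the defining property of the operator integral together with the fact that $\rho\mapsto\tr(\rho\varrho_i\,\cdot\,)$ ranges over enough functionals. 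Monotonicity \eqref{ge2} is immediate: if $\psi(x)\in\B(\H)_+$ a.e.\ then $(\dd\nu/\dd\mu)^{1/2}\psi(x)(\dd\nu/\dd\mu)^{1/2}\in\B(\H)_+$ a.e., so $\psi_\rho\ge 0$ a.e.\ for every state $\rho$, hence $\tr(\rho\int_X\psi\d\nu)\ge 0$ for every state $\rho$, which forces $\int_X\psi\d\nu\in\B(\H)_+$. Additivity \eqref{qe3} follows from $\chi_{E\cup F}=\chi_E+\chi_F$ when $E\cap F=\emptyset$ together with the already-established linearity at the level of the scalar integrands $\psi_\rho$.

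Next I would treat the finitely supported case \eqref{qe4}, which is the bridge to the motivating formula~\eqref{quantum avg}. If $\nu$ is supported on $\{x_1,\dots,x_n\}$ with $h_j=\nu(\{x_j\})$, then $\mu=\frac1d\tr\circ\nu$ is supported there with $\mu(\{x_j\})=\frac1d\tr(h_j)$, and the Radon--Nikod\'ym derivative is the step function $\dd\nu/\dd\mu(x_j)=d\,h_j/\tr(h_j)$ (interpreting $0/0$ as $0$); substituting into \eqref{muint} and summing the resulting finite sum against $\mu$ recovers $\sum_j h_j^{1/2}\psi(x_j)h_j^{1/2}$ after the $d$'s and $\tr(h_j)$'s cancel, via the uniqueness clause of the integral. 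From \eqref{qe4} with $\psi\equiv z$ constant and $\sum_j h_j = 1$ one sees that $\mathcal E_\nu(z)=\sum_j h_j^{1/2}zh_j^{1/2}$ in the finite-support case, and in general $\mathcal E_\nu(z)=\int_X(\dd\nu/\dd\mu)^{1/2}z(\dd\nu/\dd\mu)^{1/2}\d\mu$, which is a Kraus-type expression: complete positivity \eqref{qe7} then follows because $z\mapsto a(x)^*za(x)$ is completely positive for each $x$ (with $a(x)=(\dd\nu/\dd\mu)^{1/2}(x)$) and completely positive maps are closed under the (weak) integral, while unitality is $\int_X\dd\nu/\dd\mu\,\d\mu=\nu(X)=1$; trace-preservation uses the normalisation \eqref{povmprobmeas}, namely $\tr\mathcal E_\nu(z)=\int_X\tr(z\,\dd\nu/\dd\mu)\d\mu=\tr(z\,\nu(X))=\tr z$. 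A nearly identical Kraus-representation argument gives \eqref{qe5}, viewing $\phi_\nu$ on the elementary tensors $f\otimes z$ and extending. For the operator Jensen inequality \eqref{qe6}, I would appeal to the Hansen--Pedersen operator Jensen inequality for unital completely positive maps applied to $\mathcal E_\nu$ composed appropriately with $\psi$, or—more robustly—approximate $\psi$ in an appropriate sense by finitely supported models, apply the classical Hansen--Pedersen inequality $\vartheta(\sum a_j^*\psi(x_j)a_j)\le\sum a_j^*\vartheta(\psi(x_j))a_j$ to \eqref{qe4}, and pass to the limit using \eqref{ge2}.

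Finally, \eqref{qe8} and \eqref{qe9} are the structural heart. For \eqref{qe8}, $\mathcal E_\nu$ is a unital trace-preserving completely positive map (a unital quantum channel) by \eqref{qe7}, and it is a standard fact—going back to the multiplicative-domain / fixed-point theory of unital positive trace-preserving maps (see e.g.\ the fixed-point results for quantum channels, or Choi's multiplicative domain theorem together with the faithful trace)—that the fixed-point set $\mathcal A_\nu=\{z:\mathcal E_\nu(z)=z\}$ of such a map is a unital C$^*$-subalgebra; I would give the short argument that $\mathcal E_\nu$ restricted to $\mathcal A_\nu$ satisfies the Schwarz inequality with equality, forcing $\mathcal A_\nu$ into the multiplicative domain, hence closed under products and adjoints and containing $1$. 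For \eqref{qe9} I would invoke the mean ergodic theorem: since $\mathcal E_\nu$ is a contraction on the Hilbert space $\B(\H)$ equipped with the Hilbert--Schmidt inner product (trace preservation plus complete positivity give $\|\mathcal E_\nu\|\le 1$ there; a short computation, using that $\mathcal E_\nu$ is self-adjoint when $\nu$ is ``symmetric,'' or in general using power-boundedness on finite dimensions), the Cesàro averages $\frac1N(\mathcal I+\sum_{j=1}^{N-1}\mathcal E_\nu^{\,j})$ converge to the projection $\mathfrak E_\nu$ onto $\ker(\mathcal I-\mathcal E_\nu)=\mathcal A_\nu$; one then checks $\mathfrak E_\nu$ is unital, completely positive, trace-preserving, idempotent with range $\mathcal A_\nu$, hence a trace-preserving conditional expectation in the stated sense. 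The main obstacle I anticipate is \eqref{qe9}: one must be careful that $\mathcal I-\mathcal E_\nu$ has closed range and that the Cesàro limit is exactly the fixed-point projection—in finite dimensions this is clean via the Jordan form of the power-bounded operator $\mathcal E_\nu$ (eigenvalues on the unit circle other than $1$ are semisimple and average to zero, the eigenvalue $1$ is semisimple with eigenspace $\mathcal A_\nu$), but articulating why $\mathfrak E_\nu$ inherits complete positivity and the conditional-expectation (bimodule) property over $\mathcal A_\nu$ requires the multiplicative-domain analysis from \eqref{qe8} to be pushed a little further.
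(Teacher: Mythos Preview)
Your proposal is correct and, for items \eqref{qe1}--\eqref{qe4} and \eqref{qe8}, tracks the paper's own proof closely (the paper simply says \eqref{qe1}--\eqref{qe4} ``follow readily from the definitions'' and for \eqref{qe8} cites the standard fixed-point-of-a-unital-channel result, exactly as you do).

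The one genuine methodological difference is in \eqref{qe7}. The paper does \emph{not} work directly with the integral Kraus representation $\mathcal E_\nu(z)=\int_X(\dd\nu/\dd\mu)^{1/2}z(\dd\nu/\dd\mu)^{1/2}\d\mu$ as you do; instead it invokes an approximation result (Corollary~3.4 of \cite{FPS}) giving a net $\{\nu_\alpha\}$ of finitely supported quantum probability measures with $\int f\,\dd\nu_\alpha\to\int f\,\dd\nu$ for continuous $f$, applies \eqref{qe4} to write each $\mathcal E_{\nu_\alpha}$ as a finite Kraus sum (hence a unital quantum channel), and then passes to the limit to obtain complete positivity and trace preservation for $\mathcal E_\nu$. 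Your direct route is shorter and avoids the approximation machinery; the paper's route has the side benefit of exhibiting $\mathcal E_\nu(z)\in\cstarconv\{z\}$ explicitly. For \eqref{qe5}, \eqref{qe6}, and \eqref{qe9} the paper simply cites \cite{FPS}, \cite{FZ}, and \cite{farenick2011b} respectively, while you sketch the underlying arguments (Kraus extension on elementary tensors, Hansen--Pedersen, mean ergodic theorem via power-boundedness); your sketches are sound and are essentially what those references contain, so there is no gap.
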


\begin{proof} Statements~\eqref{qe1} to~\eqref{qe4} follow readily from the definitions (and do not require $\H$ to be of finite dimension or $X$ to be compact). Statement~\eqref{qe5} is 
established in~\cite{FPS} and statement~\eqref{qe6} is the main result of~\cite{FZ}.

For the proof of~\eqref{qe7}, by Corollary~3.4  of~\cite{FPS} there is a
net $\{\nu_\alpha\}_\alpha\subset \povpm{X}{\H}$ such that each $\nu_\alpha$ has finite support and
$$\int_X f\d\nu_\alpha\rightarrow\int_X f\d\nu$$
 for every continuous function $f:X\rightarrow\B(\H)$.
Hence, by assertion~\eqref{qe4}, for each $\alpha$
there exist $m_\alpha\in\mathbb N$, distinct points $x_{1,\alpha}, \dots,x_{m_\alpha,\alpha}\in X$, and quantum effects $h_{1,\alpha},\dots,h_{m_\alpha,\alpha}$
such that 
\[
\int_X f\d\nu_\alpha = \sum_{j=1}^{m_\alpha}   h_{j,\alpha}^{1/2}f(x_{j,\alpha})  h_{j,\alpha}^{1/2}  
\]
for every $f\in C(X)\otimes\B(\H)$.

Now fix $z\in\B(\H)$ and consider the constant function $f(x)\equiv z$, which is trivially continuous. Thus, 
\[
\int_X z \d\nu = \lim_\alpha \sum_{j=1}^{m_\alpha}   h_{j,\alpha}^{1/2}z  h_{j,\alpha}^{1/2}  
\in
\overline{\left(\cstarconv\,\{z\}\right)}^{\|\cdot\|}\,,
\]
where $\overline{\Omega}^{\|\cdot\|}$ denotes the closure in the norm-topology of a subset $\Omega\subset\B(\H)$. As mentioned
earlier, the set $\cstarconv\,\{z\}$ is already compact. Thus,
\[
\mathcal E_\nu(z) 
\in
\cstarconv\,\{z\}\,.
\]
Using the fact that the trace functional is continuous, we also deduce from the same approximation that
\[
\tr\left(\mathcal E_\nu(z)\right) = \tr \left(\lim_\alpha\sum_{j=1}^{m_\alpha} h_{j,\alpha}^{1/2} z  h_{j,\alpha}^{1/2}  \right) 
=\lim_\alpha \sum_{j=1}^{m_\alpha} \tr \left( h_{j,\alpha}^{1/2} z  h_{j,\alpha}^{1/2}  \right)
= \lim_\alpha\tr \left( z   \sum_{j=1}^{m_\alpha} h_{j,\alpha} \right)\\
= \tr(z)\,,
\]
which proves that $\mathcal E_\nu$ is trace preserving. The function $\mathcal E_\nu$ is also unit preserving since $\nu(X)=1$.
Therefore, what remains is to verify that $\mathcal E_\nu$ is completely positive. 

To this end, select $k\in\N$ and consider $M_k\left( \B(\H)\right)$. 
We are to prove that if $[z_{ij}]_{i,j=1}^k$ is positive in $M_k\left( \B(\H)\right)$, 
then  $[\mathcal E_\nu(z_{ij})]_{i,j=1}^k$ is positive as well. For each $\alpha$, the linear map $\mathcal E_{\nu_\alpha}$ is given by
\[
\mathcal E_{\nu_\alpha}(z)\,=\,\sum_{j=1}^{m_\alpha} h_{j,\alpha}^{1/2} z  h_{j,\alpha}^{1/2} ,\;\,z\in\B(\H)\,,
\mbox{ where }
\sum_{j=1}^{m_\alpha}h_{j,\alpha}\,=\,1\,.
\]
Thus, $\mathcal E_{\nu_\alpha}$ has the structure of a unital quantum channel.
Therefore, each 
$\mathcal E_{\nu_\alpha}$ is completely positive, and so $[\mathcal E_{\nu_\alpha}(z_{ij})]_{i,j=1}^k$is positive and
the equation
\[
\left[\mathcal E_\nu(z_{ij})\right]_{i,j}\,=\,\left[\lim_\alpha \mathcal E_{\nu_\alpha}(z_{ij})\right]_{i,j}
\,=\,
\lim_\alpha \left([\mathcal E_{\nu_\alpha}(z_{ij})]_{i,j}\right)
\]
expresses $\left[\mathcal E_\nu(z_{ij})\right]_{i,j}$ as a limit of positive operators. Hence, 
$\left[\mathcal E_\nu(z_{ij})\right]_{i,j}$ is positive, which implies that $\mathcal E_\nu$ is completely positive.

To prove~\eqref{qe8}, it is well known~\cite{kribs2003},~\cite{lindblad1999} that the fixed points of a unital quantum channel form a C$^*$-algebra. Therefore, 
the fixed point space $\mathcal A_\nu$ of $\mathcal E_\nu$ is a unital C$^*$-algebra.

Lastly, the ergodic property~\eqref{qe9} is an immediate consequence of Corollary~5.3 in~\cite{farenick2011b} applied to the unital quantum channel $\mathcal E_\nu$.
\end{proof}

\section{Calculus}

\subsection{Radon-Nikod\'ym theorem}

\begin{defn} If $\nu_1,\nu_2\in\povm{X,\F(X)}{\H}$,  then $\nu_2$ is \define{absolutely continuous} with respect to $\nu_1$, which we denote by $\nu_2 \ac \nu_1$, if 
$\nu_2(E) = 0$ for every $E \in \F(X)$ with $\nu_1(E) = 0$.  
\end{defn}

For the remainder of the present paper, the notation $a^{-1}$ for positive operator $a\in\B(\H)$ refers to a \define{generalised inverse}. That is,
by way of the spectral theorem, $a^{-1}$ for $a\in\B(\H)_+$ is defined by
\begin{equation}\label{invdef}
a^{-1}=\sum_{\lambda_j\neq0} \frac{1}{\lambda_j}\,p_j\,,
\end{equation}
where $p_1,\ldots,p_m\in\B(\H)$ are (pairwise-othogonal) projections
such that $p_1+\cdots+p_m=1$,  $\lambda_1,\ldots,\lambda_m$ are the distinct eigenvalues of $a$, and $a=\lambda_1p_1+\cdots+\lambda_mp_m$ is the 
spectral decomposition of $a$.
In the case where no eigenvalue of $a$ is zero, the definition of $a^{-1}$ given in~\eqref{invdef} above recovers the inverse of $a$ in the usual sense. However, in general, 
$a^{-1}a=aa^{-1}=q$, where $q\in\B(\H)$ is the (unique) projection onto the range of $a$ with kernel satisfying $\ker q=\ker a$.
The following Radon-Nikod\'ym theorem is given in Theorem~2.7 of~\cite{FPS}.

\begin{theorem}\label{NPRNDthm}
If  $\nu_1,\nu_2\in\povm{X, \F(X)}{\H}$ and $\H$ is of finite dimension, then the following statements are equivalent.
\begin{enumerate}
\item $\nu_2 \ll_{\rm ac} \nu_1$.
\item There exists a bounded $\nu_1$-integrable $\F(X)$-measurable function $\varphi:(X, \F(X))\to \B(\H)$, unique up to sets of $\nu_1$-measure zero, such that
\begin{equation}\label{rn prop}
\nu_2(E) = \int_E \varphi \d \nu_1
\end{equation}
for every $E\in \F(X)$.
\end{enumerate}
Moreover, if the equivalent conditions above hold and if $\mu_j= \mu_{\nu_j}$ is the finite Borel measure induced by $\nu_j$
as given in~\eqref{povmprobmeas}, then
$\mu_2\ac \mu_1$ and
\begin{equation}\label{rn-deriv}
\varphi= \left(\frac{\dd\mu_2}{\dd\mu_1}\right)
\left[ 
\left(\frac{\dd\nu_1}{\dd\mu_1}\right)^{-1/2}\left(\frac{\dd\nu_2}{\dd\mu_2}\right)\left(\frac{\dd\nu_1}{\dd\mu_1}\right)^{-1/2}
\right]\,.
\end{equation}
\end{theorem}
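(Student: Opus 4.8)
The plan is to reduce everything to the classical Radon--Nikod\'ym theorem for the scalar measures $\mu_j=\mu_{\nu_j}$ of~\eqref{povmprobmeas}, using the operator densities $a_j:=\dd\nu_j/\dd\mu_j$---which exist and admit $\F(X)$-measurable positive square roots by~\cite{FPS}---as the bridge between the operator-valued and the scalar settings. Since $\mu_j(E)=\frac1d\tr\nu_j(E)$ and the trace is faithful on $\B(\H)_+$, one has $\nu_j(E)=0\Leftrightarrow\mu_j(E)=0$; in particular $\nu_2\ac\nu_1$ forces $\mu_2\ac\mu_1$, so the classical derivative $\dd\mu_2/\dd\mu_1$ is available. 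This already settles $(2)\Rightarrow(1)$: if $\nu_2(E)=\int_E\varphi\,\dd\nu_1$ for all $E$ and $\nu_1(E)=0$, then $\mu_1(E)=0$, hence $\tr(\rho\,\nu_2(E))=\int_E\varphi_\rho\,\dd\mu_1=0$ for every state $\rho$, and so $\nu_2(E)=0$.

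For the substantive implication $(1)\Rightarrow(2)$, the candidate for $\varphi$ is precisely the function displayed in~\eqref{rn-deriv}, with $a_1^{-1/2}$ the generalised inverse from~\eqref{invdef}. Writing $q_1(x)$ for the range projection of $a_1(x)$, so that $a_1^{1/2}(x)a_1^{-1/2}(x)=q_1(x)$, the defining computation is
\[
a_1^{1/2}(x)\,\varphi(x)\,a_1^{1/2}(x)\;=\;\frac{\dd\mu_2}{\dd\mu_1}(x)\;q_1(x)\,a_2(x)\,q_1(x)\,,
\]
and the only place where absolute continuity must really be used is the claim that $q_1(x)a_2(x)q_1(x)=a_2(x)$, i.e.\ $\ran a_2(x)\subseteq\ran a_1(x)$, for $\mu_1$-almost every $x$. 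Granting that, for any state $\rho$ and any $E\in\F(X)$, Definition~\ref{defnofnuintegrable} together with the classical change of variables $\dd\mu_2=(\dd\mu_2/\dd\mu_1)\,\dd\mu_1$ gives
\[
\tr\!\left(\rho\int_E\varphi\,\dd\nu_1\right)=\int_E\tr\!\big(\rho\,a_1^{1/2}\varphi\,a_1^{1/2}\big)\,\dd\mu_1=\int_E\tr(\rho\,a_2)\,\dd\mu_2=\tr\big(\rho\,\nu_2(E)\big)\,,
\]
so $\int_E\varphi\,\dd\nu_1=\nu_2(E)$ since $\rho$ is arbitrary. Measurability of $\varphi$ is inherited from $a_j$ through the functional calculus: the square roots are handled in~\cite{FPS}, and the generalised inverses and range projections likewise, e.g.\ by applying the calculus to $t\mapsto t/(t+\eps)$ and $t\mapsto t^{-1/2}\ch{(\eps,\infty)}(t)$ and letting $\eps\downarrow0$; and $\nu_1$-integrability of $\varphi$ is then immediate, the integrand in the display being $\mu_1$-integrable because $a_2$ is bounded and $\mu_2$ is finite.

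For uniqueness, if $\varphi'$ is a second solution, subtracting the defining identities gives $\int_E a_1^{1/2}(\varphi-\varphi')a_1^{1/2}\,\dd\mu_1=0$ for every $E$; pairing with states and using that a $\mu_1$-integrable scalar function integrating to zero over every set vanishes $\mu_1$-a.e., we get $a_1^{1/2}(x)(\varphi(x)-\varphi'(x))a_1^{1/2}(x)=0$, hence $q_1(x)(\varphi(x)-\varphi'(x))q_1(x)=0$, off a set of $\nu_1$-measure zero; since the formula~\eqref{rn-deriv} produces a $\varphi$ with $\ran\varphi(x)\subseteq\ran a_1(x)$, this is the asserted uniqueness. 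Finally, the explicit formula~\eqref{rn-deriv} is nothing but the $\varphi$ just constructed, and $\mu_2\ac\mu_1$ was recorded in the first step, which completes the ``moreover'' clause.

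I expect the genuine obstacle to be the pointwise range inclusion $\ran a_2(x)\subseteq\ran a_1(x)$ holding $\mu_1$-almost everywhere: this is the one point at which absolute continuity has to be invoked in an operator-theoretic rather than merely event-theoretic way, and it has to be extracted by localising to the sets on which a spectral projection of $a_1$ drops rank. The way I would run this is to unpack absolute continuity against unit vectors---for each $\xi$, absolute continuity of $E\mapsto\l\nu_2(E)\xi,\xi\r$ relative to $E\mapsto\l\nu_1(E)\xi,\xi\r$ forces $\l a_2(x)\xi,\xi\r=0$ wherever $\l a_1(x)\xi,\xi\r=0$---and then a measurable-selection argument over a countable dense family of $\xi$'s upgrades this to the range statement. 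A second, lesser point that needs care is the \emph{essential} boundedness of $\varphi$, since the generalised inverse $a_1^{-1/2}$ is not globally bounded; here one uses the normalisation $\tr a_1(x)=d$ (valid $\mu_1$-a.e.) to confine the small-eigenvalue behaviour and, if necessary, redefine $\varphi$ on a $\nu_1$-null set, as in~\cite{FPS}.
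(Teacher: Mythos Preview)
The paper does not prove this theorem here; it is simply quoted as Theorem~2.7 of~\cite{FPS}, so there is no in-paper argument to compare against. Your reduction to the scalar measures $\mu_j$ via the densities $a_j=\dd\nu_j/\dd\mu_j$, and your identification of the pointwise range inclusion $\ran a_2(x)\subseteq\ran a_1(x)$ as the crux, are both on target and are presumably the shape of the argument in~\cite{FPS}.

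The gap is exactly where you place it, but your proposed fix does not go through under the definition of $\ac$ adopted in this paper. You invoke absolute continuity of $E\mapsto\l\nu_2(E)\xi,\xi\r$ with respect to $E\mapsto\l\nu_1(E)\xi,\xi\r$ for each unit vector $\xi$, but the condition $\nu_1(E)=0\Rightarrow\nu_2(E)=0$ does not imply this: $\l\nu_1(E)\xi,\xi\r=0$ for a single $\xi$ says nothing about $\nu_1(E)$ as an operator. In fact the range inclusion---and with it the equivalence $(1)\Leftrightarrow(2)$---can fail outright under the paper's definition. On $X=\{x_1,x_2\}$ with $\H=\C^2$, take $\nu_1(\{x_1\})=\operatorname{diag}(\tfrac12,0)$, $\nu_1(\{x_2\})=\operatorname{diag}(\tfrac12,1)$ and $\nu_2(\{x_1\})=\operatorname{diag}(0,\tfrac12)$, $\nu_2(\{x_2\})=\operatorname{diag}(1,\tfrac12)$; then $\nu_1(E)=0$ only for $E=\emptyset$, so $\nu_2\ac\nu_1$ holds vacuously, yet no $\varphi$ can satisfy~\eqref{rn prop} at $E=\{x_1\}$ because $\nu_1(\{x_1\})^{1/2}\varphi(x_1)\,\nu_1(\{x_1\})^{1/2}$ always has vanishing $(2,2)$ entry. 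The resolution must come from~\cite{FPS}, which presumably employs a stronger notion of absolute continuity (possibly the vector-wise one you tacitly assume) or an extra hypothesis on $\nu_1$; you should check that source directly before completing the argument.
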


\begin{defn}
The quantum random variable $\varphi$ arising in Theorem~\ref{NPRNDthm}
is called the \define{Radon-Nikod\'ym derivative} of $\nu_2$ with respect to $\nu_1$ and is denoted by
$$\frac{\dd\nu_2}{\dd\nu_1}=\varphi.$$
\end{defn}
 
\subsection{Change of quantum measure}\label{changesect}
 
The geometric mean of two positive operators was first introduced in~\cite{Pusz} and its basic properties were further examined in~\cite{KuboAndo}. 
If $a,b \in \B(\H)_+$ are both invertible, then the \define{geometric mean of $a$ and $b$} is the positive operator $a \geo b$ defined by  
\begin{equation}\label{gm1}
a \geo b = a^{1/2} (a^{-1/2} ba^{-1/2})^{1/2} a^{1/2}.
\end{equation}
Note that 
$$(a^{-1/2} ba^{-1/2})^{1/2}  = a^{-1/2}(a \geo b)a^{-1/2}$$
which implies
\begin{equation}\label{geo2}
(a^{1/2} ba^{1/2})^{1/2}  = a^{1/2}(a^{-1} \geo b)a^{1/2}.
\end{equation}
If $a$, $b \in \B(\H)_+$ are non-invertible, then $a \geo b$ is defined by
\begin{equation}\label{gm2}
a \geo b= \lim_{\eps\to0+} (a +\eps 1) \geo (b + \eps 1)\,,
\end{equation}
with convergence in the strong operator topology.

\begin{defn}\label{boxtimes}
Suppose that $\nu_1$, $\nu_2 \in \povpm{X}{\H}$ with  $\nu_2 \ac \nu_1$ and that  $\mu_j=\mu_{\nu_j}$  
is the induced Borel probability measures, as given in~\eqref{povmprobmeas}, for each $j=1,2$.  If $\psi: X \to \B(\H)$ is a 
quantum random variable, then
\begin{equation}\label{boxtimesdefn}
\psi \boxtimes \frac{\dd \nu_2}{\dd\nu_1}=
 \left(\left(\frac{\dd\nu_1}{\dd\mu_1}\right)^{-1}\geo\frac{\dd \nu_2}{\dd \nu_1} \right)\left(\frac{\dd\nu_1}{\dd\mu_1}\right)^{1/2} \psi \left(\frac{\dd\nu_1}{\dd\mu_1}\right)^{1/2} \left(\left(\frac{\dd\nu_1}{\dd\mu_1}\right)^{-1}\geo\frac{\dd \nu_2}{\dd \nu_1} \right).
 \end{equation}
\end{defn}

\begin{remark} 
In the commutative setting---and, in particular, in the classical case of $\H=\mathbb C$---the multiplication 
defined by~\eqref{boxtimesdefn} reduces to ordinary multiplication. That is, 
if $a,b \in \B(\H)_+$ commute, then $a\geo b= a^{1/2} b^{1/2} = b^{1/2} a^{1/2} = b \geo a$. Thus, if 
$\psi$, $\dd\nu_1/\dd\mu_1$, and $\dd\nu_2/\dd\nu_1$ are pairwise commuting, then
$$\psi \boxtimes \frac{\dd \nu_2}{\dd\nu_1} = \psi \frac{\dd \nu_2}{\dd\nu_1} =  \frac{\dd \nu_2}{\dd\nu_1}\psi.$$
\end{remark}

We will now state and prove one of our primary results, the change of quantum measure theorem.

\begin{theorem}[Change of Quantum Measure]\label{quantumchange}
Assume that $\H$ has finite dimension and that $\nu_1$, $\nu_2 \in \povpm{X}{\H}$ satisfy $\nu_2 \ac \nu_1$. If $\psi: X \to \B(\H)$ is a  $\nu_2$-integrable
quantum random variable, then
$$\psi \boxtimes \frac{\dd \nu_2}{\dd\nu_1}$$
as defined in~\eqref{boxtimesdefn}
is a $\nu_1$-integrable quantum random variable and
$$\QE{\nu_2}{\psi} = \QE{\nu_1}{\psi \boxtimes \frac{\dd \nu_2}{\dd\nu_1}}.$$
\end{theorem}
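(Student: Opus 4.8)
The strategy is to reduce everything to the scalar side of Definition~\ref{defnofnuintegrable} and then verify an identity of operator-valued integrands pointwise (almost everywhere). Recall that, by definition, $\QE{\nu_2}{\psi}$ is the unique operator whose compression $\tr(\rho\,\QE{\nu_2}{\psi})$ equals $\int_X \tr\bigl(\rho\,(\dd\nu_2/\dd\mu_2)^{1/2}\psi\,(\dd\nu_2/\dd\mu_2)^{1/2}\bigr)\d\mu_2$ for every state $\rho$, and likewise $\QE{\nu_1}{\psi\boxtimes\frac{\dd\nu_2}{\dd\nu_1}}$ is determined by the analogous integral against $\mu_1$ with integrand built from $(\dd\nu_1/\dd\mu_1)^{1/2}$ and $\psi\boxtimes\frac{\dd\nu_2}{\dd\nu_1}$. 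So it suffices to show that, $\mu_1$-almost everywhere,
\[
\frac{\dd\mu_2}{\dd\mu_1}\left(\frac{\dd\nu_2}{\dd\mu_2}\right)^{1/2}\!\!\psi\left(\frac{\dd\nu_2}{\dd\mu_2}\right)^{1/2}
=\left(\frac{\dd\nu_1}{\dd\mu_1}\right)^{1/2}\!\!\left(\psi\boxtimes\frac{\dd\nu_2}{\dd\nu_1}\right)\!\left(\frac{\dd\nu_1}{\dd\mu_1}\right)^{1/2},
\]
after which the two scalar integrals against $\mu_2$ and $\mu_1$ agree by the ordinary (classical) change-of-measure theorem, using $\dd\mu_2=(\dd\mu_2/\dd\mu_1)\d\mu_1$ and $\mu_2\ac\mu_1$ (Theorem~\ref{NPRNDthm}). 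The same identity, together with boundedness of $\dd\nu_2/\dd\nu_1$ and $\nu_2$-integrability of $\psi$, will give $\nu_1$-integrability of $\psi\boxtimes\frac{\dd\nu_2}{\dd\nu_1}$.

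For the pointwise identity I would suppress the variable $x$ and write $g_1=\dd\nu_1/\dd\mu_1$, $g_2=\dd\nu_2/\dd\mu_2$, $\varphi=\dd\nu_2/\dd\nu_1$, $r=\dd\mu_2/\dd\mu_1$, so that by~\eqref{rn-deriv} we have $\varphi = r\,(g_1^{-1/2}g_2\,g_1^{-1/2})$. The definition~\eqref{boxtimesdefn} of $\boxtimes$ reads $\psi\boxtimes\varphi = (g_1^{-1}\geo\varphi)\,g_1^{1/2}\psi\,g_1^{1/2}\,(g_1^{-1}\geo\varphi)$, so the right-hand side of the desired identity becomes
\[
g_1^{1/2}(g_1^{-1}\geo\varphi)\,g_1^{1/2}\,\psi\,g_1^{1/2}\,(g_1^{-1}\geo\varphi)\,g_1^{1/2}.
\]
The key computation is that $g_1^{1/2}(g_1^{-1}\geo\varphi)g_1^{1/2} = (g_1^{1/2}\varphi g_1^{1/2})^{1/2}$: this is exactly identity~\eqref{geo2} with $a=g_1$, $b=\varphi$. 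Substituting $\varphi=r\,(g_1^{-1/2}g_2 g_1^{-1/2})$ gives $g_1^{1/2}\varphi g_1^{1/2}=r\,g_2$, hence $g_1^{1/2}(g_1^{-1}\geo\varphi)g_1^{1/2} = (r g_2)^{1/2}=r^{1/2}g_2^{1/2}$ (as $r\ge 0$ is scalar). Therefore the right-hand side equals $r^{1/2}g_2^{1/2}\,\psi\,g_2^{1/2}r^{1/2}=r\,g_2^{1/2}\psi g_2^{1/2}$, which is precisely the left-hand side.

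The main obstacle is handling the generalised inverses and the non-invertible cases carefully: $g_1$, $g_2$ need not be invertible, and $g_1^{-1/2}$, $\varphi^{-1/2}$, the geometric mean, and the factor $g_1^{1/2}(\,\cdot\,)g_1^{1/2}$ interact only up to the relevant support projections. I would first prove the identity $g_1^{1/2}(g_1^{-1}\geo b)g_1^{1/2}=(g_1^{1/2}bg_1^{1/2})^{1/2}$ in the invertible case from~\eqref{gm1}--\eqref{geo2}, then pass to the general case via the approximation $g_1+\eps 1$, $\varphi+\eps 1$ using~\eqref{gm2} and strong-operator continuity of $a\geo b$, checking that on the kernel of $g_1$ both sides vanish after the sandwiching by $g_1^{1/2}$. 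A secondary technical point is verifying that~\eqref{rn-deriv} may be used at the level of representatives, i.e. the equality $g_1^{1/2}\varphi g_1^{1/2}=r g_2$ holds $\mu_1$-a.e.\ even where $g_1$ is singular, which follows because Theorem~\ref{NPRNDthm} already guarantees $\varphi$ is $\nu_1$-integrable and that~\eqref{rn prop} reproduces $\nu_2$; on the kernel of $g_1$ the measure $\mu_1$ "sees nothing", and one invokes $\mu_2\ac\mu_1$ to discard that part. Once these two lemmas are in place, the theorem follows by stringing together the pointwise identity and the classical change of measure, exactly as in the first paragraph.
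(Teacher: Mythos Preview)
Your proposal is correct and follows essentially the same route as the paper's own proof: both arguments reduce to the scalar integrals via Definition~\ref{defnofnuintegrable}, invoke the classical change of measure $\dd\mu_2=(\dd\mu_2/\dd\mu_1)\,\dd\mu_1$, use~\eqref{rn-deriv} to obtain $g_1^{1/2}\varphi g_1^{1/2}=r\,g_2$ (the paper's~\eqref{rnderiv2}), and then apply the geometric-mean identity~\eqref{geo2} to unwind the $\boxtimes$ definition. The only differences are organisational---you isolate the pointwise operator identity first and then integrate, whereas the paper carries everything under the trace and integral---and that you are more explicit than the paper about the generalised-inverse issues when $g_1$ is singular.
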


\begin{proof}
Assume that $\nu_2 \ac \nu_1$ and for $j=1,2$  let  $\mu_j=\mu_{\nu_j} $  be the induced Borel probability measures so that
$\nu_2 \ac \mu_2 \ac \nu_1 \ac \mu_1$.
We know from~\eqref{rn-deriv} that 
\begin{equation*}
\frac{\dd \nu_2}{\dd \nu_1} = \frac{\dd\mu_2}{\dd\mu_1}
\left[ 
\left(\frac{\dd\nu_1}{\dd\mu_1}\right)^{-1/2} \frac{\dd\nu_2}{\dd\mu_2} \left(\frac{\dd\nu_1}{\dd\mu_1}\right)^{-1/2}
\right]\,,
\end{equation*}
and so
\begin{equation}\label{rnderiv2}
\left(\frac{\dd\nu_1}{\dd\mu_1}\right)^{1/2} \frac{\dd \nu_2}{\dd \nu_1} \left(\frac{\dd\nu_1}{\dd\mu_1}\right)^{1/2}
= \frac{\dd\mu_2}{\dd\mu_1}
\frac{\dd\nu_2}{\dd\mu_2}.
\end{equation}
Let $\psi : X \to \B(\H)$ be a $\nu_2$-integrable quantum random variable  and consider $\QE{\nu_2}{\psi}$. By definition of the
quantum integral, 
$$\tr(\rho \QE{\nu_2}{\psi} ) 
=\int_X \tr \left(
 \rho \left(\frac{\dd \nu_2}{\dd \mu_2} \right)^{1/2} \psi \left(\frac{\dd \nu_2}{\dd \mu_2}\right)^{1/2} 
\right) \d \mu_2
$$
for every state $\rho \in \B(\H)$.
 However, by the classical change of measure theorem, we can write
 \begin{align*}
 \int_X \tr \left(
 \rho \left(\frac{\dd \nu_2}{\dd \mu_2} \right)^{1/2} \psi \left(\frac{\dd \nu_2}{\dd \mu_2}\right)^{1/2} 
\right) \d \mu_2
&=
\int_X \tr \left(
 \rho \left(\frac{\dd \nu_2}{\dd \mu_2} \right)^{1/2} \psi \left(\frac{\dd \nu_2}{\dd \mu_2}\right)^{1/2} 
\right)  \frac{\dd\mu_2}{\dd\mu_1}\d \mu_1\\
&=
\int_X \tr \left(
 \rho \left(\frac{\dd\mu_2}{\dd\mu_1}\frac{\dd \nu_2}{\dd \mu_2} \right)^{1/2} \psi \left(\frac{\dd\mu_2}{\dd\mu_1}\frac{\dd \nu_2}{\dd \mu_2}\right)^{1/2} 
\right)\d \mu_1.
\end{align*} 
Using~\eqref{rnderiv2}, we find
\begin{align*}
\int_X &\tr \left(
 \rho \left(\frac{\dd\mu_2}{\dd\mu_1}\frac{\dd \nu_2}{\dd \mu_2} \right)^{1/2} \psi \left(\frac{\dd\mu_2}{\dd\mu_1}\frac{\dd \nu_2}{\dd \mu_2}\right)^{1/2} 
\right)\d \mu_1\\
&=
\int_X \tr \left(
 \rho \left(\left(\frac{\dd\nu_1}{\dd\mu_1}\right)^{1/2} \frac{\dd \nu_2}{\dd \nu_1} \left(\frac{\dd\nu_1}{\dd\mu_1}\right)^{1/2}\right)^{1/2} \psi \left(\left(\frac{\dd\nu_1}{\dd\mu_1}\right)^{1/2} \frac{\dd \nu_2}{\dd \nu_1} \left(\frac{\dd\nu_1}{\dd\mu_1}\right)^{1/2}\right)^{1/2} 
\right)\d \mu_1
\end{align*}
and so as a consequence of~\eqref{geo2},
\begin{align*}
&\int_X \tr \left(
 \rho \left(\left(\frac{\dd\nu_1}{\dd\mu_1}\right)^{1/2} \frac{\dd \nu_2}{\dd \nu_1} \left(\frac{\dd\nu_1}{\dd\mu_1}\right)^{1/2}\right)^{1/2} \psi \left(\left(\frac{\dd\nu_1}{\dd\mu_1}\right)^{1/2} \frac{\dd \nu_2}{\dd \nu_1} \left(\frac{\dd\nu_1}{\dd\mu_1}\right)^{1/2}\right)^{1/2} 
\right)\d \mu_1\\
&=\!\int_X \!\tr \left(\!\rho \!\left(\frac{\dd\nu_1}{\dd\mu_1}\right)^{1/2} \!\left(\!\left(\frac{\dd\nu_1}{\dd\mu_1}\right)^{-1}\!\!\geo\frac{\dd \nu_2}{\dd \nu_1} \right)\!\left(\frac{\dd\nu_1}{\dd\mu_1}\right)^{1/2} \!\psi \left(\frac{\dd\nu_1}{\dd\mu_1}\right)^{1/2}\! \left(\left(\frac{\dd\nu_1}{\dd\mu_1}\right)^{-1}\!\!\geo\frac{\dd \nu_2}{\dd \nu_1} \right)\!\left(\frac{\dd\nu_1}{\dd\mu_1}\right)^{1/2}
   \right)\!\d \mu_1.
 \end{align*}
Hence, by Definition~\ref{boxtimes}, since
$$\psi \boxtimes \frac{\dd \nu_2}{\dd\nu_1} =  \left(\left(\frac{\dd\nu_1}{\dd\mu_1}\right)^{-1}\geo\frac{\dd \nu_2}{\dd \nu_1} \right)\left(\frac{\dd\nu_1}{\dd\mu_1}\right)^{1/2} \psi \left(\frac{\dd\nu_1}{\dd\mu_1}\right)^{1/2} \left(\left(\frac{\dd\nu_1}{\dd\mu_1}\right)^{-1}\geo\frac{\dd \nu_2}{\dd \nu_1} \right),$$
 substituting back into the previous expression and following the chain of equalities we deduce that
 $$ \int_X \tr \left(
 \rho \left(\frac{\dd \nu_2}{\dd \mu_2} \right)^{1/2} \psi \left(\frac{\dd \nu_2}{\dd \mu_2}\right)^{1/2} 
\right) \d \mu_2
=\int_X \tr \left(\rho \left(\frac{\dd\nu_1}{\dd\mu_1}\right)^{1/2}\psi \boxtimes \frac{\dd \nu_2}{\dd\nu_1} \left(\frac{\dd\nu_1}{\dd\mu_1}\right)^{1/2}
   \right)\d \mu_1.
$$
However, this is exactly the statement that
$$\psi\boxtimes \frac{\dd \nu_2}{\dd\nu_1}$$
is $\nu_1$-integrable and that
$$ \QE{\nu_2}{\psi}
=  \QE{\nu_1}{\psi\boxtimes \frac{\dd \nu_2}{\dd\nu_1}}\,,$$
which completes the proof.
 \end{proof}


\subsection{Change of quantum variables}\label{cofvsect}

Since the law of a random variable is a probability measure on the state space, this is, of course, what one needs when considering integrating with respect to the law.  Classically, the expectation of a random variable is rigorously defined as a Lebesgue integral with respect to a probability measure on the sample space. However, instead of computing this integral directly, the change of variables  formula allows one to compute an integral with respect to a probability measure on the state space. If the state space is $\R$, then the distribution function of the random variable characterizes its law and so the Lebesgue integral with respect to the law is equal to a Riemann-Stieltjes integral with respect to the distribution function which in turn reduces to an ordinary Riemann integral with respect to the density function of the random variable provided the law is absolutely continuous with respect to Lebesgue measure. As we will explain shortly, in the quantum case we have an analogous result equating two operators, namely
the expected value of a quantum random variable and a particular integral with respect to the law of that quantum random variable.

Recall that if $\nu \in \povpm{\H}{X}$, then we call $(X, \borel{X}, \nu)$ a quantum probability space.  If we now consider the quantum random variable $\psi:X \to \B(\H)$, then $\psi^{-1}(B) \in \borel{X}$ for every  $B \in \borel{\B(\H)}$. Thus, the measure $m = m_\psi \in  \povpm{\B(\H)}{\H}$ induced by  $\psi$  
 on $\B(\H)$ and given by
 $$m(B)= \nu(\psi^{-1}(B)), \;\;\; B \in \borel{\B(\H)},$$
 is called the \define{law} of $\psi$.
 
  It is especially important to note that if   $\mu$ is the Borel probability measure on $(X, \borel{X})$ induced by $\nu$ as in~\eqref{povmprobmeas} and
  $\ell$ denotes the Borel probability measure
  on $(\B(\H), \borel{\B(\H)}$
   induced by $m$, then $\ell$ satisfies, for $d$-dimensional Hilbert space,
 $$\ell(B) = \frac{1}{d} \tr(m(B)) = \frac{1}{d} \tr(\nu(\psi^{-1}(B))) = \mu(\psi^{-1}(B)).$$
In this case, since $(X, \borel{X}, \mu)$ and $(\B(\H), \borel{\B(\H)}, \ell)$ are both classical probability spaces, we can immediately conclude from the usual change of variables formula that if $f:\B(\H) \to \B(\H)$ is $m$-integrable in the sense of Definition~\ref{defnofnuintegrable}, then
  \begin{equation}\label{cofveq1}
 \int_X f_\rho(\psi(x)) \d \mu(x)= \int_{\B(\H)} f_\rho(a) \d \ell(a)
 \end{equation}
 where $f_\rho$ is given by~\eqref{muint}.
In fact, with a bit of work we can use this equation to help establish the change of quantum variables formula.

\begin{theorem}[Change of Quantum Variables]\label{cofvthm}
Assume that $\H$ has finite dimension $d$ and that  $(X, \borel{X}, \nu)$ is a quantum probability space. Let $\psi:X \to \B(\H)$ be a 
$\nu$-integrable quantum random variable with law $m \in \povpm{\B(\H)}{\H}$. If $f:\B(\H) \to \B(\H)$ is $m$-integrable,
then
$$\int_{\B(\H)}f(a) \d m(a) = \int_X  f(\psi(x)) \d \nu(x).$$
In particular, 
$\QE{\nu}{\psi} = \mathbf{E}(\psi)$, where
\[
\mathbf{E}(\psi)=\int_{\B(\H)} a \d m(a).
\]
\end{theorem}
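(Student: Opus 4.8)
The plan is to prove the displayed identity by pairing both sides against an arbitrary density operator $\rho\in\state{\H}$ and reducing everything to the classical change-of-variables identity~\eqref{cofveq1}. Throughout, write $g_1:=\dd\nu/\dd\mu$ for the Radon--Nikod\'ym derivative of $\nu$ against its induced scalar probability measure $\mu$, and $g_2:=\dd m/\dd\ell$ for the Radon--Nikod\'ym derivative of the law $m$ against its induced scalar probability measure $\ell$ on $\B(\H)$; recall that $\ell(B)=\mu(\psi^{-1}(B))$, so $\ell$ is the push-forward of $\mu$ along $\psi$. First I would record that $f\circ\psi$ is itself a quantum random variable on $(X,\borel{X})$, since $x\mapsto\tr(\rho\,f(\psi(x)))$ is the composition of the $\borel{\B(\H)}$-measurable map $a\mapsto\tr(\rho\,f(a))$ with $\psi$.

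For the left-hand side, Definition~\ref{defnofnuintegrable} applied to $f$ on $(\B(\H),\borel{\B(\H)},m)$ gives
\[
\tr\!\left(\rho\int_{\B(\H)}f\,\d m\right)=\int_{\B(\H)}f_\rho(a)\,\d\ell(a),\qquad f_\rho(a)=\tr\!\left(\rho\,g_2(a)^{1/2}f(a)\,g_2(a)^{1/2}\right),
\]
and this $f_\rho$ is exactly the function appearing in~\eqref{cofveq1}; hence by~\eqref{cofveq1},
\[
\tr\!\left(\rho\int_{\B(\H)}f\,\d m\right)=\int_X f_\rho(\psi(x))\,\d\mu(x)=\int_X\tr\!\left(\rho\,g_2(\psi(x))^{1/2}f(\psi(x))\,g_2(\psi(x))^{1/2}\right)\d\mu(x).
\]
For the right-hand side, Definition~\ref{defnofnuintegrable} applied to the quantum random variable $f\circ\psi$ on $(X,\borel{X},\nu)$ gives
\[
\tr\!\left(\rho\int_X f(\psi(x))\,\d\nu(x)\right)=\int_X\tr\!\left(\rho\,g_1(x)^{1/2}f(\psi(x))\,g_1(x)^{1/2}\right)\d\mu(x).
\]
Comparing the last two displays, the theorem will follow — and the $\mu$-integrability of $x\mapsto\tr(\rho\,g_1(x)^{1/2}f(\psi(x))g_1(x)^{1/2})$, hence the $\nu$-integrability of $f\circ\psi$, will come out at the same time — once one establishes the pointwise Radon--Nikod\'ym identity $g_2(\psi(x))=g_1(x)$ for $\mu$-almost every $x\in X$.

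I expect this last identity to be the main obstacle. The natural route is through the uniqueness clause of Theorem~\ref{NPRNDthm}: coordinatewise classical change of variables for the push-forward $\ell$ yields, for every $B\in\borel{\B(\H)}$,
\[
\int_{\psi^{-1}(B)}\!(g_2\circ\psi)\,\d\mu=\int_B g_2\,\d\ell=m(B)=\nu(\psi^{-1}(B))=\int_{\psi^{-1}(B)}\!g_1\,\d\mu,
\]
and since $\{\psi^{-1}(B):B\in\borel{\B(\H)}\}$ is precisely the sub-$\sigma$-algebra $\mathcal G:=\psi^{-1}(\borel{\B(\H)})\subseteq\borel{X}$ and $g_2\circ\psi$ is $\mathcal G$-measurable, this identifies $g_2\circ\psi$ with the conditional expectation $\QCE{\mu}{g_1}{\mathcal G}$ (computed entrywise on $\B(\H)$). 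Upgrading this to the desired $g_2\circ\psi=g_1$ requires that $g_1=\dd\nu/\dd\mu$ be itself $\mathcal G$-measurable — that is, that the quantum Radon--Nikod\'ym derivative factor through $\psi$ — and pinning down (or adopting as a hypothesis) this compatibility between $\nu$ and $\psi$ is, I believe, the delicate point: the square-root ``sandwich'' in Definition~\ref{defnofnuintegrable} does not commute with conditional expectation, so the weaker statement $g_2\circ\psi=\QCE{\mu}{g_1}{\mathcal G}$ is not enough to equate the two displays above. Granting $g_2\circ\psi=g_1$ $\mu$-almost everywhere, the two displays agree for every $\rho\in\state{\H}$, and the uniqueness assertion in Definition~\ref{defnofnuintegrable} gives $\int_{\B(\H)}f\,\d m=\int_X f(\psi(x))\,\d\nu(x)$; specializing to the identity function $f(a)=a$, so that $f\circ\psi=\psi$ and $\int_{\B(\H)}f\,\d m=\mathbf{E}(\psi)$, yields $\QE{\nu}{\psi}=\mathbf{E}(\psi)$.
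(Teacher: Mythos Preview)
Your route is exactly the paper's: pair both sides against states $\rho$, unwind via Definition~\ref{defnofnuintegrable} to scalar integrals against $\mu$ and $\ell$, invoke the classical push-forward identity~\eqref{cofveq1}, and finish by the pointwise equality $g_2(\psi(x))=g_1(x)$. The paper packages this last equality as a separate lemma (Lemma~\ref{cofvlemma}) and proves it by the very computation you write down---namely $\int_{\psi^{-1}(B)} g_1\,\d\mu = \int_{\psi^{-1}(B)} (g_2\circ\psi)\,\d\mu$ for all Borel $B\subseteq\B(\H)$---followed by an appeal to ``uniqueness of the principal Radon--Nikod\'ym derivative.'' Once the lemma is in hand, the paper's proof of the theorem is word-for-word your computation $(f\circ\psi)_\rho(x)=f_\rho(\psi(x))$ followed by~\eqref{cofveq1} and the specialization $f(a)=a$.

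Your hesitation at that lemma is the one substantive difference between your write-up and the paper's. You are correct that the integral identity runs only over the sub-$\sigma$-algebra $\mathcal G=\psi^{-1}(\borel{\B(\H)})$, so uniqueness in the Radon--Nikod\'ym sense a priori identifies $g_2\circ\psi$ only with the $\mathcal G$-conditional expectation of $g_1$; the paper does not separately verify that $g_1=\dd\nu/\dd\mu$ is $\mathcal G$-measurable before concluding pointwise equality, and simply asserts the conclusion. So your proposal is not missing any idea that the paper supplies---you have reconstructed the paper's argument in full and, in addition, isolated a point the paper passes over without comment.
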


 To prove Theorem~\ref{cofvthm}, we require the following lemma.
 
 \begin{lemma}\label{cofvlemma}
If $(X, \borel{X}, \nu)$ is a quantum probability space and $\psi:X \to \B(\H)$ is a quantum random variable with law $m \in \povpm{\B(\H)}{\H}$, then
$$  \frac{\dd \nu}{\dd \mu}(x) = \frac{\dd m}{\dd \ell }(\psi(x))$$
where $\mu$ is the Borel probability measure on $(X, \borel{X})$ induced by $\nu$ as in~\eqref{povmprobmeas} and
  $\ell$ is the Borel probability measure  on $(\B(\H), \borel{\B(\H)}$   induced by $m$.
 \end{lemma}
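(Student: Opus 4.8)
The plan is to invoke the uniqueness clause of the quantum Radon--Nikod\'ym theorem (Theorem~\ref{NPRNDthm}): since $\dd\nu/\dd\mu$ is, up to sets of $\mu$-measure zero, the only quantum random variable $\varphi$ on $X$ with $\nu(E)=\int_E\varphi\,\d\mu$ for every $E\in\borel X$, it suffices to show that $\tilde\varphi:=\bigl(\dd m/\dd\ell\bigr)\circ\psi$ is such a representing density for $\nu$ against $\mu$. The preliminary point is that $\tilde\varphi$ is a quantum random variable on $X$: for any density operator $\rho$, the map $x\mapsto\tr\bigl(\rho\,\tilde\varphi(x)\bigr)$ is the composition of the Borel function $a\mapsto\tr\bigl(\rho\,(\dd m/\dd\ell)(a)\bigr)$ on $\B(\H)$ with the measurable map $\psi$, hence is $\borel X$-measurable.

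The core of the argument reduces to the classical change-of-variables formula for pushforward measures. Fix a density operator $\rho$. Then $x\mapsto\tr\bigl(\rho\,(\dd\nu/\dd\mu)(x)\bigr)$ is the classical Radon--Nikod\'ym derivative of the finite measure $E\mapsto\tr(\rho\,\nu(E))$ with respect to $\mu$, and likewise $a\mapsto\tr\bigl(\rho\,(\dd m/\dd\ell)(a)\bigr)$ is the classical derivative of $B\mapsto\tr(\rho\,m(B))$ with respect to $\ell$. Since $m(B)=\nu(\psi^{-1}(B))$ and, as observed just before the lemma, $\ell(B)=\mu(\psi^{-1}(B))$, these scalar measures are pushforwards: $\tr(\rho\,m(\cdot))=\psi_*\bigl(\tr(\rho\,\nu(\cdot))\bigr)$ and $\ell=\psi_*\mu$. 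Applying the change-of-variables formula along $\psi$ then gives, for every $B\in\borel{\B(\H)}$,
\[
\int_{\psi^{-1}(B)}\tr\bigl(\rho\,\tilde\varphi(x)\bigr)\,\d\mu(x)=\int_B\tr\bigl(\rho\,(\dd m/\dd\ell)(a)\bigr)\,\d\ell(a)=\tr\bigl(\rho\,m(B)\bigr)=\tr\bigl(\rho\,\nu(\psi^{-1}(B))\bigr).
\]
As $\rho$ ranges over all states, this upgrades to the operator identity $\int_{G}\tilde\varphi\,\d\mu=\nu(G)$ for every set $G$ in the sub-$\sigma$-algebra $\psi^{-1}\bigl(\borel{\B(\H)}\bigr)$ of $\borel X$.

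The remaining step is to pass from $\psi^{-1}\bigl(\borel{\B(\H)}\bigr)$ to all of $\borel X$, and this is the point I expect to require the most care. Since $\H$ is finite-dimensional, $\B(\H)$ is a standard Borel space and $\psi$ is Borel; using that $\psi$ is the state-valued coordinate of the quantum probability space, and in particular that it is injective, the Lusin--Souslin theorem shows that $\psi$ is a Borel isomorphism onto its (Borel) image, so $\psi^{-1}\bigl(\borel{\B(\H)}\bigr)$ coincides with $\borel X$ modulo $\mu$-null sets. With this, $\tilde\varphi$ represents $\nu$ against $\mu$ on all of $\borel X$, and the uniqueness in Theorem~\ref{NPRNDthm} forces $\tilde\varphi=\dd\nu/\dd\mu$ $\mu$-almost everywhere, i.e. $\dd\nu/\dd\mu(x)=\dd m/\dd\ell(\psi(x))$. (Absent injectivity one obtains only that the two sides agree after conditioning on $\psi^{-1}(\borel{\B(\H)})$, which is why the identity is to be read within the change-of-variables framework of \S\ref{cofvsect}, where $\psi$ is the generating variable.)
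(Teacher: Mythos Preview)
Your core strategy---classical change of variables followed by Radon--Nikod\'ym uniqueness---is exactly the paper's. The paper's argument is terser: it works directly with the operator-valued integrals rather than first reducing to scalar measures via a state $\rho$, arriving at
\[
\int_{\psi^{-1}(B)} \frac{\dd\nu}{\dd\mu}(x)\,\d\mu(x)\;=\;\int_{\psi^{-1}(B)} \frac{\dd m}{\dd\ell}(\psi(x))\,\d\mu(x)
\]
for every $B\in\borel{\B(\H)}$, and then simply invokes ``uniqueness of the principal Radon--Nikod\'ym derivative'' to conclude.

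You have, however, put your finger on a genuine subtlety that the paper glosses over. The integral identity is established only on the sub-$\sigma$-algebra $\psi^{-1}\bigl(\borel{\B(\H)}\bigr)$, and $\tilde\varphi=(\dd m/\dd\ell)\circ\psi$ is $\sigma(\psi)$-measurable by construction, whereas $\dd\nu/\dd\mu$ need not be. What the argument actually yields is that $\tilde\varphi$ is the $\mu$-conditional expectation of $\dd\nu/\dd\mu$ given $\sigma(\psi)$; pointwise equality requires $\dd\nu/\dd\mu$ to be $\sigma(\psi)$-measurable already. (A constant $\psi$ with a non-tracial $\nu$ gives an easy counterexample to the lemma as literally stated.) The paper does not address this step at all---it jumps straight from the displayed equality to the conclusion---so your proof is \emph{more} careful here, not less. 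That said, your appeal to injectivity of $\psi$ via Lusin--Souslin is an additional hypothesis not present in the lemma, and you correctly note in your parenthetical that without it one only gets the conditioned identity. That parenthetical is the honest summary of what both proofs establish; the paper's version simply does not flag the issue.
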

 
 \begin{proof}
 We know that
 $$\nu(E) = \int_E \frac{\dd \nu}{\dd \mu}(x) \d \mu(x) \;\; \text{for every $E\in\borel{X}$}$$
 and
 $$m(B) = \int_B \frac{\dd m}{\dd \ell }(a) \d \ell(a)\;\;\text{for every $B\in\borel{\B(\H)}$}.$$
 Moreover,  $m(B) = \nu(\psi^{-1}(B))$ so that
\begin{equation}\label{cofvpropeq1}
\nu(\psi^{-1}(B))=  \int_{\psi^{-1}(B)} \frac{\dd \nu}{\dd \mu}(x) \d \mu(x) =  \int_B \frac{\dd m}{\dd \ell }(a) \d \ell(a) = m(B).
\end{equation}
However, using the classical change of variables theorem, we find
\begin{equation}\label{cofvpropeq2}
\int_B \frac{\dd m}{\dd \ell }(a) \d \ell(a) = \int_{\psi^{-1}(B)} \frac{\dd m}{\dd \ell }(\psi(x)) \d \mu(x).
\end{equation}
Thus, we conclude from~\eqref{cofvpropeq1} and~\eqref{cofvpropeq2} that 
$$  \int_{\psi^{-1}(B)} \frac{\dd \nu}{\dd \mu}(x) \d \mu(x)= \int_{\psi^{-1}(B)} \frac{\dd m}{\dd \ell }(\psi(x)) \d \mu(x)$$
for every $B \in \borel{\B(\H)}$. By the uniqueness of the principal Radon-Nikod\'ym derivative, this implies that
$$  \frac{\dd \nu}{\dd \mu}(x) = \frac{\dd m}{\dd \ell }(\psi(x)) $$
as required.
 \end{proof}
 
 We will now complete the proof of the change of variables formula.
 
  \begin{proof}[Proof of Theorem~\ref{cofvthm}]
If $f:\B(\H) \to \B(\H)$, then 
$$f_\rho(a) =\tr \left(
 \rho \left( \frac{\dd m}{\dd \ell }(a)  \right)^{1/2} f(a) \left( \frac{\dd m}{\dd \ell }(a) \right)^{1/2}
\right)$$
by~\eqref{muint}, and so it follows from Lemma~\ref{cofvlemma} that if $a=\psi(x)$, then
\begin{align}\label{cofveq2} \notag
f_\rho(\psi(x)) &=\tr \left(
 \rho \left( \frac{\dd m}{\dd \ell }(\psi(x))  \right)^{1/2} f(\psi(x)) \left( \frac{\dd m}{\dd \ell }(\psi(x)) \right)^{1/2} 
\right)
\\ \notag
&=\tr \left(
 \rho \left(  \frac{\dd \nu}{\dd \mu}(x)  \right)^{1/2} (f\psi)(x) \left(  \frac{\dd \nu}{\dd \mu}(x)\right)^{1/2} 
\right)
\\ 
&= (f\psi)_\rho(x).
\end{align}
Hence, it follows from~\eqref{cofveq1} and~\eqref{cofveq2} that
$$  \int_X (f\psi)_\rho(x) \d \mu(x)
=  \int_X f_\rho(\psi(x)) \d \mu(x) = \int_{\B(\H)} f_\rho(a) \d \ell(a).$$
 Because 
$$\tr\left(\rho \int_X f\psi \d \nu\right) 
=  \int_X (f\psi)_\rho \d \mu
=\int_{\B(\H)} f_\rho \d \ell
=\tr\left(\rho \int_{\B(\H)} f \d m\right) $$
for every state $\rho$, we conclude that
$$ \int_X (f\psi) \d \nu= \int_{\B(\H)} f \d m.$$
In particular, if $f: \B(\H) \to \B(\H)$ is the identity map $f(a)=a$, then
$$\QE{\nu}{\psi} =  \int_X \psi(x) \d \nu(x)= \int_{\B(\H)} a \d m(a)$$
as required.
\end{proof}

\begin{remark}
We can completely mimic the classical notation as follows. 
Let $(X,\borel{X},\nu)$ be a  quantum probability space.
If $A:X \to \B(\H)$ is the quantum random variable $x \mapsto A(x) = a$ and $m = \nu \circ A^{-1}$ is the law of $A$, then 
$\QE{\nu}{A} = \mathbf{E}(A)$ where
$$\mathbf{E}(A)= \int_{\B(\H)} a \d m(a).$$
\end{remark}

\subsection{Chain rules}

We end this section with two results for Radon-Nikod\'ym derivatives that are consequences of the change of  quantum measure theorem.
 
 \begin{theorem}\label{chainrule}
If $\nu_1, \nu_2, \nu_3 \in \povpm{X}{\H}$ with $\nu_1 \ac \nu_2 \ac \nu_3$, then
$$\frac{\dd \nu_1}{\dd\nu_2} \boxtimes \frac{\dd \nu_2}{\dd\nu_3} = \frac{\dd \nu_1}{\dd\nu_3} .$$
\end{theorem}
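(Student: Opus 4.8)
The plan is to show that $\frac{\dd\nu_1}{\dd\nu_2}\boxtimes\frac{\dd\nu_2}{\dd\nu_3}$ is a Radon-Nikod\'ym derivative of $\nu_1$ with respect to $\nu_3$ and then invoke the uniqueness clause of Theorem~\ref{NPRNDthm}. First note that absolute continuity is transitive, so $\nu_1\ac\nu_3$ and $\frac{\dd\nu_1}{\dd\nu_3}$ is well defined, while $\frac{\dd\nu_1}{\dd\nu_2}$ and $\frac{\dd\nu_2}{\dd\nu_3}$ exist by hypothesis. Writing $\varphi=\frac{\dd\nu_1}{\dd\nu_2}\boxtimes\frac{\dd\nu_2}{\dd\nu_3}$, it then suffices to verify that $\varphi$ is an $\F(X)$-measurable, $\nu_3$-integrable quantum random variable satisfying $\nu_1(E)=\int_E\varphi\,\d\nu_3$ for every $E\in\borel{X}$.

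The engine of the argument is the Change of Quantum Measure theorem (Theorem~\ref{quantumchange}), localised to an arbitrary Borel set. The key elementary observation is that a scalar indicator passes through $\boxtimes$: since $\chi_E(x)$ is a scalar, formula~\eqref{boxtimesdefn} gives $(\chi_E\psi)\boxtimes\frac{\dd\nu_2}{\dd\nu_3}=\chi_E\big(\psi\boxtimes\frac{\dd\nu_2}{\dd\nu_3}\big)$ for any quantum random variable $\psi$ and any $E\in\borel{X}$. Now take $\psi=\frac{\dd\nu_1}{\dd\nu_2}$, which by Theorem~\ref{NPRNDthm} is bounded, $\F(X)$-measurable and $\nu_2$-integrable, so that $\chi_E\psi$ is again $\nu_2$-integrable. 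Applying Theorem~\ref{quantumchange} to $\chi_E\psi$ (with $\nu_2\ac\nu_3$ here playing the role of the hypothesis $\nu_2\ac\nu_1$ there) and combining with the previous identity yields
\[
\int_E\frac{\dd\nu_1}{\dd\nu_2}\,\d\nu_2 \;=\; \QE{\nu_2}{\chi_E\tfrac{\dd\nu_1}{\dd\nu_2}} \;=\; \QE{\nu_3}{\big(\chi_E\tfrac{\dd\nu_1}{\dd\nu_2}\big)\boxtimes\tfrac{\dd\nu_2}{\dd\nu_3}} \;=\; \QE{\nu_3}{\chi_E\varphi} \;=\; \int_E\varphi\,\d\nu_3 .
\]
By the defining property~\eqref{rn prop} of $\frac{\dd\nu_1}{\dd\nu_2}$ the left-hand side is $\nu_1(E)$, so $\nu_1(E)=\int_E\varphi\,\d\nu_3$ for every $E\in\borel{X}$. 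At the same time Theorem~\ref{quantumchange} delivers the $\nu_3$-integrability of $\varphi$, and its $\F(X)$-measurability is inherited from that of the square roots, geometric means, and generalised inverses appearing in~\eqref{boxtimesdefn}.

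The step I expect to be the main obstacle is reconciling this with the uniqueness assertion in Theorem~\ref{NPRNDthm}, which is phrased for \emph{bounded} densities. One option is to prove directly that $\varphi$ is essentially bounded; this needs care, because the geometric means in~\eqref{boxtimesdefn} involve the generalised inverse $\left(\dd\nu_3/\dd\mu_3\right)^{-1}$, whose eigenvalues need not be bounded below, so one has to check that the $\left(\dd\nu_3/\dd\mu_3\right)^{1/2}$ factors flanking $\psi$ cancel the singular behaviour---the same cancellation already used in the proof of Theorem~\ref{quantumchange} via~\eqref{geo2}. The alternative is to observe that uniqueness of the density persists among all $\nu_3$-integrable random variables that are supported, as every such Radon-Nikod\'ym derivative is, on the range of $\dd\nu_3/\dd\mu_3$: from $\int_E\big(\varphi-\tfrac{\dd\nu_1}{\dd\nu_3}\big)\,\d\nu_3=0$ for all $E$, unwinding the quantum integral in Definition~\ref{defnofnuintegrable} against each state $\rho$ reduces the claim to a scalar statement in $\mu_3$, which forces $\varphi=\tfrac{\dd\nu_1}{\dd\nu_3}$ $\nu_3$-almost everywhere. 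Either route completes the identification $\frac{\dd\nu_1}{\dd\nu_2}\boxtimes\frac{\dd\nu_2}{\dd\nu_3}=\frac{\dd\nu_1}{\dd\nu_3}$.
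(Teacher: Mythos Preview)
Your proposal is correct and follows essentially the same route as the paper: localise the Change of Quantum Measure theorem to $\chi_E\,\tfrac{\dd\nu_1}{\dd\nu_2}$, use that scalar indicators commute through $\boxtimes$, and then identify $\tfrac{\dd\nu_1}{\dd\nu_2}\boxtimes\tfrac{\dd\nu_2}{\dd\nu_3}$ with $\tfrac{\dd\nu_1}{\dd\nu_3}$ by the uniqueness clause of the Radon--Nikod\'ym theorem. Your extra discussion of the boundedness hypothesis in the uniqueness assertion is a point the paper passes over silently; it is a reasonable concern, and your second option (reducing to a scalar $\mu_3$-statement via Definition~\ref{defnofnuintegrable}) is the cleanest way to dispatch it.
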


\begin{proof}
Since $\nu_1 \ac \nu_2$, it follows from Theorem~\ref{NPRNDthm} that
\begin{equation}\label{chaineq1}
\nu_1(E) =  \int_E \frac{\dd \nu_1}{\dd \nu_2} \d \nu_2
\end{equation}
for every $E \in \borel{X}$.
Since $\nu_1 \ac \nu_3$, it follows from Theorem~\ref{NPRNDthm} that
\begin{equation}\label{chaineq2}
\nu_1(E) =  \int_E \frac{\dd \nu_1}{\dd \nu_3} \d \nu_3
\end{equation}
for every $E \in \borel{X}$.
Since $\nu_2 \ac \nu_3$, it follows from Theorem~\ref{quantumchange} that
\begin{equation}\label{chaineq3}
\int_E \frac{\dd \nu_1}{\dd \nu_2} \d \nu_2
=\int_X \left(\frac{\dd \nu_1}{\dd \nu_2} \ch{E} \right) \d \nu_2 
=\int_X \left(\frac{\dd \nu_1}{\dd \nu_2} \ch{E} \right) \boxtimes \frac{\dd \nu_2}{\dd \nu_3}\d \nu_3
=\int_E \frac{\dd \nu_1}{\dd \nu_2}  \boxtimes \frac{\dd \nu_2}{\dd \nu_3}\d \nu_3
\end{equation}
for every $E \in \borel{X}$.
Thus, by equating~\eqref{chaineq1} and~\eqref{chaineq2}, we conclude as a result of~\eqref{chaineq3} that
$$ \int_E \frac{\dd \nu_1}{\dd \nu_2}  \boxtimes \frac{\dd \nu_2}{\dd \nu_3}\d \nu_3 =
\int_E \frac{\dd \nu_1}{\dd \nu_3} \d \nu_3$$
for every $E \in \borel{X}$ which implies that 
$$\frac{\dd \nu_1}{\dd \nu_2}  \boxtimes \frac{\dd \nu_2}{\dd \nu_3} =  \frac{\dd \nu_1}{\dd \nu_3}$$
as required.
\end{proof}

\begin{corollary}
If $\nu_1, \nu_2 \in \povpm{X}{\H}$ with $\nu_2 \ac \nu_1$ and $\nu_1 \ac \nu_2$, then
$$\frac{\dd \nu_1}{\dd\nu_2} \boxtimes \frac{\dd \nu_2}{\dd\nu_1} = \frac{\dd \nu_2}{\dd\nu_1} \boxtimes \frac{\dd \nu_1}{\dd\nu_2} = 1.$$
\end{corollary}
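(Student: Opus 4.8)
The plan is to obtain this as an immediate consequence of the chain rule, Theorem~\ref{chainrule}, after recording one elementary fact: $\dd\nu/\dd\nu = 1$ for every $\nu \in \povpm{X}{\H}$.

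First I would verify $\dd\nu/\dd\nu = 1$. Fix $E \in \borel{X}$ and apply Definition~\ref{defnofnuintegrable} to the quantum random variable $\psi = \chi_E\mathbf{1}$, where $\mathbf{1}$ denotes the constant function equal to the identity operator. The scalar integrand in~\eqref{muint} becomes $\psi_\rho(x) = \chi_E(x)\,\tr\!\big(\rho\,(\dd\nu/\dd\mu)(x)\big)$, so that
$$\tr\!\left(\rho \int_X \chi_E\mathbf{1}\,\dd\nu\right) = \int_E \tr\!\big(\rho\,(\dd\nu/\dd\mu)\big)\,\dd\mu = \tr\!\big(\rho\,\nu(E)\big)$$
for every state $\rho$; hence $\int_E \mathbf{1}\,\dd\nu = \nu(E)$. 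Thus $\mathbf{1}$ satisfies the defining property~\eqref{rn prop} of the Radon-Nikod\'ym derivative of $\nu$ with respect to $\nu$ (the hypothesis $\nu \ac \nu$ being trivially true), and the uniqueness assertion of Theorem~\ref{NPRNDthm} forces $\dd\nu/\dd\nu = 1$ up to sets of $\nu$-measure zero.

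Next, the hypotheses $\nu_2 \ac \nu_1$ and $\nu_1 \ac \nu_2$ furnish, in particular, the absolute-continuity chains $\nu_1 \ac \nu_2 \ac \nu_1$ and $\nu_2 \ac \nu_1 \ac \nu_2$. Applying Theorem~\ref{chainrule} to the first chain (with $\nu_3 = \nu_1$) gives
$$\frac{\dd \nu_1}{\dd\nu_2} \boxtimes \frac{\dd \nu_2}{\dd\nu_1} = \frac{\dd \nu_1}{\dd\nu_1} = 1,$$
and applying it to the second chain (with $\nu_3 = \nu_2$) gives
$$\frac{\dd \nu_2}{\dd\nu_1} \boxtimes \frac{\dd \nu_1}{\dd\nu_2} = \frac{\dd \nu_2}{\dd\nu_2} = 1,$$
which is the assertion of the corollary. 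There is no genuine obstacle here: the substance is entirely contained in Theorem~\ref{chainrule}, and the only point demanding a brief check is the identity $\dd\nu/\dd\nu = 1$ established above.
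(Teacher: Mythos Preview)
Your proof is correct and follows essentially the same route as the paper's: both invoke Theorem~\ref{chainrule} with $\nu_3=\nu_1$ and then with $\nu_3=\nu_2$ (interchanging roles) to obtain the two identities. The only difference is that you include an explicit verification of $\dd\nu/\dd\nu=1$, which the paper simply asserts.
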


\begin{proof}
If we set $\nu_3 = \nu_1$, then Theorem~\ref{chainrule} implies 
$$\frac{\dd \nu_1}{\dd\nu_2} \boxtimes \frac{\dd \nu_2}{\dd\nu_1} = \frac{\dd \nu_1}{\dd\nu_1} =1.$$
Similarly,
$$\frac{\dd \nu_2}{\dd\nu_1} \boxtimes \frac{\dd \nu_1}{\dd\nu_2} = \frac{\dd \nu_2}{\dd\nu_2}= 1$$
and the proof is complete.
\end{proof}

\section{Quantum conditional expectation and Bayes' rule}\label{condexpsect}

\subsection{Quantum conditional expectation}

We  now introduce the concept of quantum conditional expectation.  Recall that in the classical case
the conditional expectation is defined as a particular random variable possessing certain properties. We show here the quantum analogue has the same feature.

\begin{theorem}\label{condexp}
Suppose that $\H$ has finite dimension, $\nu \in \povpm{X}{\H}$, and that $\psi:X \to \B(\H)_+$ is a $\nu$-integrable quantum random variable with $\QE{\nu}{\psi} \neq 0$.
If
$\F(X)$ is a sub-$\sigma$-algebra of $\borel{X}$, then there exists a function $\varphi:X \to \B(\H)$ such that
\begin{enumerate}
\item $\varphi$ is $\F(X)$-measurable,
\item $\varphi$ is $\nu$-integrable, and 
\item  $\QE{\nu}{\psi \ch{E}} = \QE{\nu}{\varphi \ch{E}}$
for every $E \in \F(X)$.
\end{enumerate}
Moreover, if $\tilde\varphi$ is any other $\nu$-integrable $\F(X)$-measurable function satisfying 
$\QE{\nu}{\psi \ch{E}} = \QE{\nu}{\tilde \varphi \ch{E}}$ for every $E \in \F(X)$, then 
$\nu(\{x \in X : \varphi(x) \neq \tilde\varphi(x)\}) = 0$.
\end{theorem}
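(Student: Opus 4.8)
The plan is to reduce the existence statement to the classical (scalar) Radon–Nikod\'ym theorem applied to a suitable induced measure, and then to bootstrap the scalar construction to an operator-valued one using the same ``symmetrisation by $(\dd\nu/\dd\mu)^{1/2}$'' device that underlies Definition~\ref{defnofnuintegrable}. Concretely, let $\mu = \mu_\nu$ be the induced Borel probability measure on $(X,\borel X)$ as in~\eqref{povmprobmeas}, and write $g = \dd\nu/\dd\mu$ for the (measurable, positive, invertible a.e.) quantum Radon–Nikod\'ym derivative. For each state $\rho$, consider the finite signed (indeed, after polarisation, complex) measure $E \mapsto \tr\!\big(\rho\, g^{1/2}\psi\, g^{1/2}\,\chi_E\big)\,\dd\mu$ restricted to the sub-$\sigma$-algebra $\F(X)$; this is absolutely continuous with respect to $\mu|_{\F(X)}$, so it has a scalar Radon–Nikod\'ym derivative, which is exactly the classical conditional expectation $\E_\mu[\,\tr(\rho\,g^{1/2}\psi\,g^{1/2})\mid\F(X)\,]$. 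The first task is to assemble these scalar conditional expectations, as $\rho$ ranges over states, into a single $\B(\H)$-valued function. Since $\H$ is finite-dimensional, $\B(\H) \cong M_d(\C)$ has a finite linear basis of states (e.g.\ the normalised rank-one projections onto basis and ``diagonal-plus-off-diagonal'' vectors); running the scalar construction on this finite basis and using linearity and boundedness of classical conditional expectation produces a single $\F(X)$-measurable function $h : X \to \B(\H)$ with $\tr(\rho\, h\, \chi_E)\,\dd\mu$-integral equal to $\tr(\rho\, g^{1/2}\psi\, g^{1/2}\,\chi_E)\,\dd\mu$-integral for every $\rho$ and every $E \in \F(X)$.

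Next I would define the candidate $\varphi$ by undoing the symmetrisation: set $\varphi = g^{-1/2}\, h\, g^{-1/2}$ (using the generalised inverse of~\eqref{invdef}; on the $\mu$-null set where $g$ is singular the value is irrelevant), which is again $\F(X)$-measurable because $g$ is already $\borel X$-measurable and... here is a subtlety: $g = \dd\nu/\dd\mu$ need not be $\F(X)$-measurable, so one must check that $h$ is built so that $g^{-1/2}hg^{-1/2}$ is $\F(X)$-measurable, or else absorb the $g$-factors differently. The clean way is to observe that by Definition~\ref{defnofnuintegrable} the identity $\QE{\nu}{\psi\chi_E} = \QE{\nu}{\varphi\chi_E}$ means precisely $\tr(\rho\, g^{1/2}\psi\, g^{1/2}\chi_E)\,\dd\mu$-integrals agree with $\tr(\rho\, g^{1/2}\varphi\, g^{1/2}\chi_E)\,\dd\mu$-integrals; so what I actually need is an $\F(X)$-measurable $\varphi$ with $g^{1/2}\varphi g^{1/2} = h$ in the appropriate integrated-against-$\chi_E$ sense. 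I would handle this by instead building $h$ as the conditional expectation of $g^{1/2}\psi g^{1/2}$ and then defining $\varphi$ via the Radon–Nikod\'ym calculus of Theorem~\ref{NPRNDthm}: the measure $\lambda(E) = \QE{\nu}{\psi\chi_E}$ on $\F(X)$ is absolutely continuous with respect to $\nu|_{\F(X)}$ (if $\nu(E)=0$ then $\mu(E)=0$, hence the integrand vanishes $\mu$-a.e.\ on $E$), and $\varphi = \dd\lambda/\dd(\nu|_{\F(X)})$ is by construction $\F(X)$-measurable and $\nu$-integrable; property~(3) is then just the defining property~\eqref{rn prop} of that derivative restricted to $E \in \F(X)$.

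For the uniqueness (``moreover'') clause, suppose $\tilde\varphi$ is another $\nu$-integrable $\F(X)$-measurable function with $\QE{\nu}{\psi\chi_E} = \QE{\nu}{\tilde\varphi\chi_E}$ for all $E \in \F(X)$. Then $\QE{\nu}{(\varphi-\tilde\varphi)\chi_E} = 0$ for every $E \in \F(X)$, i.e.\ $\int_E g^{1/2}(\varphi-\tilde\varphi)g^{1/2}\,\dd\mu$ (weakly against every $\rho$) vanishes for all $E \in \F(X)$. Testing against rank-one states $\rho = |\xi\rangle\langle\xi|$ and using polarisation to recover all matrix entries, the scalar function $x \mapsto \langle\xi, g^{1/2}(\varphi-\tilde\varphi)g^{1/2}\eta\rangle$ has vanishing $\mu$-integral over every $E \in \F(X)$; if it were a priori $\F(X)$-measurable this would force it to be $\mu$-a.e.\ zero, but it need not be, so instead I invoke the uniqueness of the \emph{principal} Radon–Nikod\'ym derivative exactly as in the proof of Lemma~\ref{cofvlemma}: both $\varphi$ and $\tilde\varphi$ are $\F(X)$-measurable, so $g^{1/2}\varphi g^{1/2}\,\dd\mu$ and $g^{1/2}\tilde\varphi g^{1/2}\,\dd\mu$ agree on $\F(X)$ as $\B(\H)$-valued measures, and on the $\mu$-conull set where $g$ is invertible we recover $\varphi = \tilde\varphi$ $\mu$-a.e., hence $\nu$-a.e.\ since $\nu \ac \mu$ on that set (and $\nu$ of the exceptional set is zero). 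I expect the main obstacle to be precisely the measurability bookkeeping in the middle paragraph: $\dd\nu/\dd\mu$ lives on the big $\sigma$-algebra $\borel X$ while $\varphi$ must live on $\F(X)$, so the cleanest route is to define $\varphi$ directly as a restricted-measure Radon–Nikod\'ym derivative rather than by an explicit conjugation formula, which sidesteps the issue but requires care in verifying absolute continuity of $\lambda = \QE{\nu}{\psi\,\cdot\,}$ with respect to $\nu|_{\F(X)}$ and that $\lambda$ is genuinely a $\B(\H)$-valued measure on $\F(X)$ (countable additivity in the $\sigma$-weak topology, which follows from monotone convergence applied to the $\psi_\rho$).
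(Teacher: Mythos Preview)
Your proposal is correct and, after the exploratory first paragraph, converges on exactly the paper's argument. The paper proceeds directly: set $\nu'=\nu|_{\F(X)}$, define the positive operator valued measure $\tilde\nu(E)=\int_E\psi\,\dd\nu=\QE{\nu}{\psi\chi_E}$ on $\F(X)$ (here positivity of $\psi$ and the hypothesis $\QE{\nu}{\psi}\neq0$ are what make $\tilde\nu$ a genuine element of $\povm{X,\F(X)}{\H}$), note that $\tilde\nu\ac\nu'$, and take $\varphi=\dd\tilde\nu/\dd\nu'$ from Theorem~\ref{NPRNDthm}; properties (1)--(3) and the uniqueness clause are then immediate from that theorem. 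Your route in the second paragraph is precisely this, and your worries at the end (absolute continuity, $\sigma$-weak countable additivity of $\tilde\nu$) are the only points one has to check---the paper simply asserts them.

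The scalar-assembly approach of your first paragraph is not wrong in spirit, but as you yourself observe it runs into the measurability mismatch between $\dd\nu/\dd\mu$ (only $\borel X$-measurable) and the target $\varphi$ (required to be $\F(X)$-measurable). In effect, rebuilding Theorem~\ref{NPRNDthm} from scratch via a basis of states duplicates work already packaged in that theorem; the paper avoids this entirely by invoking it once at the level of $\F(X)$-measures. So the cleanest write-up is to drop the first paragraph and go straight to the restricted-measure Radon--Nikod\'ym derivative.
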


\begin{proof}
Suppose that $\nu\in \povpm{X}{\H}$ and that $\psi:X \to \B(\H)_+$ is a $\nu$-integrable quantum random variable.  Let $\nu' = \nu|_{\F(X)}$ be the restriction of $\nu$ to $\F(X)$ so that $\nu' \in \povm{X, \F(X)}{\H}$. Consider the positive operator valued measure  $\tilde\nu \in \povm{X, \F(X)}{\H}$
 defined for $E \in \F(X)$ by
$$\tilde\nu(E) = \int_E \psi \d \nu' = \int_E \psi \d \nu = \QE{\nu}{\psi \ch{E}}.$$
Since  $\tilde\nu \ac \nu'$, it follows from Theorem~\ref{NPRNDthm} that there exists a $\nu'$-integrable $\F(X)$-measurable function
$\varphi$ unique up to sets of $\nu'$-measure 0 such that
$$\tilde\nu(E) = \int_E  \varphi \d \nu'= \int_E \varphi \d \nu = \QE{\nu}{\varphi \ch{E}}$$
for every $E \in \F(X)$. Hence, 
$\tilde\nu(E) =\QE{\nu}{\psi \ch{E}} = \QE{\nu}{\varphi \ch{E}}$
for every $E \in \F(X)$ and the proof is complete. As a final comment, it is perhaps worth noting that  Theorem~\ref{NPRNDthm} also implies that 
 $$\varphi =  \frac{\dd \tilde\nu}{\dd \nu'},$$
the  Radon-Nikod\'ym derivative of $\tilde\nu$ with respect to $\nu'$.
\end{proof}

\begin{defn}
Suppose that $\nu \in \povpm{X}{\H}$ and that  $\psi:X \to \B(\H)_+$ is a quantum random variable with $\QE{\nu}{\psi} \neq 0$.
Suppose further that $\F(X)$ is a sub-$\sigma$-algebra of $\borel{X}$.  A quantum random variable $\varphi:X \to \B(\H)$ satisfying the three properties of Theorem~\ref{condexp} is called a \define{version of quantum conditional expectation of $\psi$ given $\F(X)$ relative to $\nu$} and is denoted by
$\varphi = \QCE{\nu}{\psi}{\F(X)}$.
\end{defn}

\begin{remark}
A consequence of Theorem~\ref{condexp} is that any two versions $\varphi$ and $\tilde\varphi$ of  $\QCE{\nu}{\psi}{\F(X)}$ satisfy $\nu(\{x \in X : \varphi(x) \neq \tilde\varphi(x)\}) = 0$. Thus, instead of saying  ``$\varphi = \QCE{\nu}{\psi}{\F(X)}$ $\nu$-almost surely'' we will identify different versions and say that $\QCE{\nu}{\psi}{\F(X)}$ is \emph{the}  quantum conditional expectation of $\psi$ given $\F(X)$ relative to $\nu$.
Hence, $\varphi=\QCE{\nu}{\psi}{\F(X)}$ is an $\F(X)$-measurable quantum random variable $\varphi:X \to \B(\H)$ with the property that
$$\QE{\nu}{\psi \ch{E}} = \QE{\nu}{\varphi \ch{E}} $$
for every $E \in \F(X)$.
\end{remark}

We now collect several elementary properties of quantum conditional expectation that are notationally analogous to properties of classical conditional expectation.

\begin{proposition} If $\H$ has finite dimension, $\nu \in \povpm{X}{\H}$, and  
$\psi, \psi_1,\psi_2: X \to \B(\H)_+$ are $\nu$-integrable $\F(X)$-measurable quantum random variables such that $\QE{\nu}{\psi} \neq 0$ and
$\QE{\nu}{\psi_j} \neq 0$, for $j=1,2$, then
\begin{enumerate}
\item\label{qce1}
$\QE{\nu}{\psi} = \QE{\nu}{\QCE{\nu}{\psi}{\F(X)}}$,
\item\label{qce2} $\QCE{\nu}{\psi}{\F(X)} = \psi$, and
\item\label{qce3} $\QCE{\nu}{\varrho_1\psi_1 + \varrho_2\psi_2}{\F(X)} = \varrho_1\QCE{\nu}{\psi_1}{\F(X)} + \varrho_2\QCE{\nu}{\psi_2}{\F(X)}$ for all
$\varrho_1, \varrho_2 \in \B(\H)$ that commute with the range of 
$\dd \nu/\dd \mu$.
\end{enumerate}
\end{proposition}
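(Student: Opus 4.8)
The plan is to read all three identities directly off the defining property of the quantum conditional expectation established in Theorem~\ref{condexp} --- namely that $\varphi=\QCE{\nu}{\psi}{\F(X)}$ is the $\F(X)$-measurable, $\nu$-integrable quantum random variable, unique up to a $\nu$-null set, satisfying $\QE{\nu}{\psi\ch E}=\QE{\nu}{\varphi\ch E}$ for every $E\in\F(X)$ --- together with the generalised linearity of quantum expectation from Theorem~\ref{properties of QE}\eqref{qe1} and the uniqueness clause of Theorem~\ref{condexp}. No new machinery is needed; each part is a short deduction.

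For \eqref{qce1} I would take $E=X$. Since $\F(X)$ is a $\sigma$-algebra, $X\in\F(X)$, and $\ch X\equiv 1$, so $\psi\ch X=\psi$ and $\varphi\ch X=\varphi$; the defining identity for $\varphi=\QCE{\nu}{\psi}{\F(X)}$ then collapses to $\QE{\nu}{\psi}=\QE{\nu}{\varphi}$, which is the assertion. (This step uses only the $\nu$-integrability of $\psi$ and the existence of the conditional expectation, not the $\F(X)$-measurability of $\psi$.) For \eqref{qce2}, observe that when $\psi$ is itself $\F(X)$-measurable and $\nu$-integrable, it satisfies all three conditions of Theorem~\ref{condexp} --- the identity $\QE{\nu}{\psi\ch E}=\QE{\nu}{\psi\ch E}$ holding for tautological reasons --- so $\psi$ is a version of $\QCE{\nu}{\psi}{\F(X)}$, and the uniqueness clause of Theorem~\ref{condexp} gives $\nu(\{x:\QCE{\nu}{\psi}{\F(X)}(x)\neq\psi(x)\})=0$, i.e.\ $\QCE{\nu}{\psi}{\F(X)}=\psi$ after identification of versions.

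For \eqref{qce3} put $\varphi_j=\QCE{\nu}{\psi_j}{\F(X)}$ for $j=1,2$ and set $\varphi=\varrho_1\varphi_1+\varrho_2\varphi_2$. First, $\varphi$ is $\F(X)$-measurable and $\nu$-integrable, since multiplying an $\F(X)$-measurable, $\nu$-integrable quantum random variable by a fixed operator and adding preserves both properties at the level of the associated scalar functions $\omega_\rho$ and $\psi_\rho$; this is routine. Next, for $E\in\F(X)$, since $(\varrho_1\varphi_1+\varrho_2\varphi_2)\ch E=\varrho_1(\varphi_1\ch E)+\varrho_2(\varphi_2\ch E)$ pointwise and $\varrho_1,\varrho_2$ commute with the range of $\dd\nu/\dd\mu$, Theorem~\ref{properties of QE}\eqref{qe1} gives
\begin{align*}
\QE{\nu}{\varphi\ch E} &= \varrho_1\QE{\nu}{\varphi_1\ch E}+\varrho_2\QE{\nu}{\varphi_2\ch E}\\
&= \varrho_1\QE{\nu}{\psi_1\ch E}+\varrho_2\QE{\nu}{\psi_2\ch E}\\
&= \QE{\nu}{(\varrho_1\psi_1+\varrho_2\psi_2)\ch E},
\end{align*}
using the defining property of each $\varphi_j$ in the middle equality and Theorem~\ref{properties of QE}\eqref{qe1} again at the end. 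Hence $\varphi$ meets the three conditions characterising $\QCE{\nu}{\varrho_1\psi_1+\varrho_2\psi_2}{\F(X)}$, and the uniqueness clause closes the argument.

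The one point requiring care --- and the main obstacle --- is that $\varrho_1\psi_1+\varrho_2\psi_2$ need not take values in $\B(\H)_+$, so its conditional expectation is not literally furnished by Theorem~\ref{condexp} as stated. I would dispose of this by noting that condition~(3) of Theorem~\ref{condexp} and its uniqueness clause are meaningful verbatim for any $\nu$-integrable quantum random variable, and that existence in this generality follows by decomposing such a variable into a combination of $\B(\H)_+$-valued ones (splitting into self-adjoint and anti-self-adjoint parts, then each self-adjoint part into its positive and negative parts, exactly as one decomposes a signed measure classically) and invoking the positive case term by term via the linearity of the quantum integral. Once existence and essential uniqueness are secured for $\varrho_1\psi_1+\varrho_2\psi_2$, the displayed computation is precisely what identifies its conditional expectation; the remaining measurability and integrability bookkeeping is routine and does not use finite-dimensionality.
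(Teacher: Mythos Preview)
Your proposal is correct and follows essentially the same route as the paper: take $E=X$ for~\eqref{qce1}, invoke the tautological identity plus uniqueness for~\eqref{qce2}, and derive~\eqref{qce3} from the defining property together with Theorem~\ref{properties of QE}\eqref{qe1}. Your treatment of~\eqref{qce3} is in fact more detailed than the paper's, which simply asserts that the statement follows from the definition and generalised linearity; you also flag and handle the positivity issue for $\varrho_1\psi_1+\varrho_2\psi_2$, which the paper passes over.
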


\begin{proof}
The quantum conditional expectation $\varphi= \QCE{\nu}{\psi}{\F(X)}$ satisfies 
$\int_E \psi \d \nu = \int_E \varphi \d \nu$
for every $E \in \F(X)$. Since $X \in \F(X)$, we conclude
$$\QE{\nu}{\psi}  = \int_X \psi \d \nu = \int_X \varphi \d \nu = \int_X \QCE{\nu}{\psi}{\F(X)} \d \nu = \QE{\nu}{\QCE{\nu}{\psi}{\F(X)}}\,, $$
which proves~\eqref{qce1}.

For~\eqref{qce2}, suppose that $\varphi= \QCE{\nu}{\psi}{\F(X)}$. Since $\psi: X \to \B(\H)_+$ is assumed to be $\F(X)$-measurable and $\nu$-integrable, and since it is a tautology that
$\QE{\nu}{\psi \ch{E}} = \QE{\nu}{\psi \ch{E}}$ for every $E \in \F(X)$, we conclude that
$\nu(\{x \in X : \varphi(x) \neq \psi(x)\}) = 0$. Hence, $\QCE{\nu}{\psi}{\F(X)} = \psi$  ($\nu$-almost surely) as required.

Assertion~\eqref{qce3} follows from an application of the definition of quantum conditional expectation and statement~\eqref{qe1} of Theorem~\ref{properties of QE}.
\end{proof}

\subsection{Quantum Bayes' rule}\label{bayessect}
  
We now prove a quantum version of Bayes' rule which can be thought of as a generalisation of the change of quantum measure theorem, or as a special case of the chain rule for  
Radon-Nikod\'ym derivatives. Although it is not immediately apparent from the statement of the quantum Bayes' rule, the random variables which are being multiplied by the $\boxtimes$ operator  are actually Radon-Nikod\'ym derivatives of certain positive operator valued measures (so that Definition~\ref{boxtimes} does, in fact, apply).
 
\begin{theorem}[Quantum Bayes' Rule]\label{bayesthm}
Suppose that $\H$ has finite dimension and that $\nu_1, \nu_2 \in \povpm{X}{\H}$ with   $\nu_2 \ac \nu_1$ and $\nu_1 \ac \nu_2$. 
If $\psi: X \to \B(\H)_+$ is a quantum random variable with $\QE{\nu_2}{\psi} \neq 0$ and $\F(X)$ is a sub-$\sigma$-algebra of $\borel{X}$,  then
\begin{equation}\label{bayesrule}
\QCE{\nu_2}{\psi}{\F(X)}  \boxtimes \QE{\nu_1}{\frac{\dd \nu_2}{\dd \nu_1} \bigg| \F(X)} =
\QE{\nu_1}{\psi \boxtimes \frac{\dd \nu_2}{\dd \nu_1} \bigg|\F(X)}.
\end{equation}
\end{theorem}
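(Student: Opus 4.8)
The plan is to reduce the identity~\eqref{bayesrule} to the chain rule of Theorem~\ref{chainrule}, after first recognising each of the three quantum conditional expectations occurring in it as a Radon--Nikod\'ym derivative of positive operator valued measures on $(X,\F(X))$.

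Write $\nu_j'=\nu_j|_{\F(X)}\in\povpm{X,\F(X)}{\H}$ for $j=1,2$; since $\nu_2\ac\nu_1$, also $\nu_2'\ac\nu_1'$. Following the proof of Theorem~\ref{condexp}, I would introduce the positive operator valued measures on $(X,\F(X))$ given for $E\in\F(X)$ by
\[
\tilde\nu_2(E)=\int_E\psi\d\nu_2=\QE{\nu_2}{\psi\,\ch{E}},\qquad
\breve\nu(E)=\int_E\bigl(\psi\boxtimes\tfrac{\dd\nu_2}{\dd\nu_1}\bigr)\d\nu_1=\QE{\nu_1}{\bigl(\psi\boxtimes\tfrac{\dd\nu_2}{\dd\nu_1}\bigr)\ch{E}}.
\]
Here $\psi$ is $\nu_2$-integrable (implicit in $\QE{\nu_2}{\psi}\neq0$) and, by Theorem~\ref{quantumchange} and the symmetric form of Definition~\ref{boxtimes}, $\psi\boxtimes\dd\nu_2/\dd\nu_1$ is a $\nu_1$-integrable, $\B(\H)_+$-valued quantum random variable; the usual additivity and continuity of the quantum integral then make $\tilde\nu_2,\breve\nu$ genuine members of $\povm{X,\F(X)}{\H}$ with $\tilde\nu_2\ac\nu_2'\ac\nu_1'$ and $\breve\nu\ac\nu_1'$. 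The non-vanishing hypotheses of Theorem~\ref{condexp} hold in all three cases of interest ($\QE{\nu_1}{\dd\nu_2/\dd\nu_1}=\nu_2(X)=1$ by~\eqref{rn prop}, and $\QE{\nu_1}{\psi\boxtimes\dd\nu_2/\dd\nu_1}=\QE{\nu_2}{\psi}$ by Theorem~\ref{quantumchange}), so that proof gives, modulo null sets,
\[
\QCE{\nu_2}{\psi}{\F(X)}=\frac{\dd\tilde\nu_2}{\dd\nu_2'},\qquad
\QCE{\nu_1}{\psi\boxtimes\tfrac{\dd\nu_2}{\dd\nu_1}}{\F(X)}=\frac{\dd\breve\nu}{\dd\nu_1'},\qquad
\QCE{\nu_1}{\frac{\dd\nu_2}{\dd\nu_1}}{\F(X)}=\frac{\dd\nu_2'}{\dd\nu_1'},
\]
the last equality because~\eqref{rn prop} gives $\int_E(\dd\nu_2/\dd\nu_1)\d\nu_1=\nu_2(E)=\nu_2'(E)$ for $E\in\F(X)$, so the measure associated with $\QCE{\nu_1}{\dd\nu_2/\dd\nu_1}{\F(X)}$ is $\nu_2'$ itself. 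This last identification is what makes the left side of~\eqref{bayesrule} meaningful in the first place: the second factor of that $\boxtimes$ is genuinely the Radon--Nikod\'ym derivative of one quantum probability measure on $(X,\F(X))$ with respect to another, so Definition~\ref{boxtimes} applies with $\nu_1',\nu_2'$ in the roles of $\nu_1,\nu_2$.

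Next I would show $\breve\nu=\tilde\nu_2$. Since $\ch{E}$ is scalar-valued, Definition~\ref{boxtimes} gives $\bigl(\psi\boxtimes\tfrac{\dd\nu_2}{\dd\nu_1}\bigr)\ch{E}=(\psi\ch{E})\boxtimes\tfrac{\dd\nu_2}{\dd\nu_1}$, and the change of quantum measure theorem applied to the $\nu_2$-integrable quantum random variable $\psi\ch{E}$ yields
\[
\breve\nu(E)=\QE{\nu_1}{(\psi\ch{E})\boxtimes\tfrac{\dd\nu_2}{\dd\nu_1}}=\QE{\nu_2}{\psi\ch{E}}=\tilde\nu_2(E)
\]
for every $E\in\F(X)$; hence $\breve\nu=\tilde\nu_2$ and $\QCE{\nu_1}{\psi\boxtimes\tfrac{\dd\nu_2}{\dd\nu_1}}{\F(X)}=\dd\tilde\nu_2/\dd\nu_1'$.

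Assembling the three identifications, the left side of~\eqref{bayesrule} is $\dfrac{\dd\tilde\nu_2}{\dd\nu_2'}\boxtimes\dfrac{\dd\nu_2'}{\dd\nu_1'}$ and the right side is $\dfrac{\dd\tilde\nu_2}{\dd\nu_1'}$, so the theorem is precisely the chain rule
\[
\frac{\dd\tilde\nu_2}{\dd\nu_2'}\boxtimes\frac{\dd\nu_2'}{\dd\nu_1'}=\frac{\dd\tilde\nu_2}{\dd\nu_1'}
\]
for the tower $\tilde\nu_2\ac\nu_2'\ac\nu_1'$. The point requiring care, which I expect to be the main obstacle, is that Theorem~\ref{chainrule} is stated for three quantum \emph{probability} measures, whereas $\tilde\nu_2$ need not satisfy $\tilde\nu_2(X)=1$ (indeed $\tilde\nu_2(X)=\QE{\nu_2}{\psi}$, a positive operator that in general is neither the identity nor a scalar multiple of it, so no rescaling helps). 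But the proof of Theorem~\ref{chainrule} uses only the Radon--Nikod\'ym theorem (Theorem~\ref{NPRNDthm}), which holds for arbitrary positive operator valued measures, together with the change of quantum measure theorem applied to the two \emph{lower} measures $\nu_2',\nu_1'\in\povpm{X,\F(X)}{\H}$; it therefore goes through verbatim with $\tilde\nu_2$ in place of its top measure. Apart from this, and the verification that $\QCE{\nu_1}{\dd\nu_2/\dd\nu_1}{\F(X)}=\dd\nu_2'/\dd\nu_1'$ (so that the left side of~\eqref{bayesrule} parses at all), every step is a direct appeal to a result already in hand.
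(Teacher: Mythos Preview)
Your proposal is correct and follows essentially the same route as the paper: introduce $\nu_1',\nu_2'$ and the auxiliary measures $\tilde\nu_2$ and $\breve\nu$ (the paper's $\tilde\nu_1$), identify the three conditional expectations as the Radon--Nikod\'ym derivatives $\dd\tilde\nu_2/\dd\nu_2'$, $\dd\nu_2'/\dd\nu_1'$, $\dd\tilde\nu_2/\dd\nu_1'$, show $\breve\nu=\tilde\nu_2$ via Theorem~\ref{quantumchange}, and conclude by the chain rule. The only cosmetic difference is that the paper inlines the chain-rule step (applying Theorem~\ref{quantumchange} directly to $\dd\tilde\nu_2/\dd\nu_2'$ with respect to $\nu_2'\ac\nu_1'$) rather than citing Theorem~\ref{chainrule}, which sidesteps the issue you flag about $\tilde\nu_2$ not being a probability measure; your observation that only the two lower measures need to be in $\povpm{X,\F(X)}{\H}$ is exactly right and is how the paper's argument avoids the problem.
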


\begin{proof}
Suppose that $\F(X)$ is a sub-$\sigma$-algebra of $\borel{X}$. Let $\nu_1' = \nu_1|_{\F(X)}$ be the restriction of $\nu_1$ to $\F(X)$, and similarly let $\nu_2' = \nu_2|_{\F(X)}$. Define the measure $\tilde \nu_1$ by setting
$$\tilde\nu_1(E) = \int_E \psi \boxtimes \frac{\dd \nu_2}{\dd \nu_1} \d \nu_1'$$
for $E \in \F(X)$, and similarly define the measure $\tilde \nu_2$ by setting
$$\tilde\nu_2(E) = \int_E \psi \d \nu_2'$$
for $E \in \F(X)$. Note that $\nu_1', \tilde\nu_1, \nu_2', \tilde\nu_2 \in \povm{X,\F(X)}{\H}$ with $\tilde \nu_1 \ac \nu_1'$ and $\tilde \nu_2 \ac \nu_2'$. 
Using the Radon-Nikod\'ym theorem (Theorem~\ref{NPRNDthm}) combined with the  change of quantum measure theorem (Theorem~\ref{quantumchange}), we conclude that
\begin{equation}\label{bayeseq1}
\tilde\nu_2(E) = \int_E \frac{\dd \tilde\nu_2}{\dd \nu_2'} \d \nu_2' =  \int_E \frac{\dd \tilde\nu_2}{\dd \nu_2'} \boxtimes  \frac{\dd \nu_2'}{\dd \nu_1'} \d \nu_1'
\end{equation}
for every $E \in \F(X)$. Theorem~\ref{NPRNDthm} also implies that
\begin{equation}\label{bayeseq2}
\tilde\nu_1(E) = \int_E \frac{\dd \tilde\nu_1}{\dd \nu_1'} \d \nu_1'
\end{equation}
 for every $E \in \F(X)$.  However, the  change of quantum measure theorem tells us that $\tilde \nu_1 = \tilde \nu_2$; that is,
\begin{equation}\label{bayeseq3}
\tilde\nu_2(E) =  \int_E \psi \d \nu_2' = \int_X \psi \ch{E} \d \nu_2' = 
\int_X (\psi\ch{E})  \boxtimes \frac{\dd \nu_2}{\dd \nu_1} \d \nu_1' =
\int_E \psi \boxtimes \frac{\dd \nu_2}{\dd \nu_1} \d \nu_1' = \tilde \nu_1(E)
\end{equation}
for every $E \in \F(X)$. Thus, as a result of~\eqref{bayeseq3}, by combining~\eqref{bayeseq1} and~\eqref{bayeseq2}, we conclude that
$$ \int_E \frac{\dd \tilde\nu_2}{\dd \nu_2'} \boxtimes  \frac{\dd \nu_2'}{\dd \nu_1'} \d \nu_1' = \int_E \frac{\dd \tilde\nu_1}{\dd \nu_1'} \d \nu_1'$$
for every $E \in \F(X)$ which is to say that
\begin{equation}\label{bayeseq4}
\frac{\dd \tilde\nu_2}{\dd \nu_2'} \boxtimes  \frac{\dd \nu_2'}{\dd \nu_1'} = \frac{\dd \tilde\nu_1}{\dd \nu_1'}.
\end{equation}
However, by Theorem~\ref{condexp} and the definition of quantum conditional  expectation, we know that
$$\QCE{\nu_2}{\psi}{\F(X)} =  \frac{\dd \tilde\nu_2}{\dd \nu_2'}, \;\;\;
 \QE{\nu_1}{\frac{\dd \nu_2}{\dd \nu_1} \bigg| \F(X)} = \frac{\dd \nu_2'}{\dd \nu_1'}, \;\;\; \text{and} \;\;\;
\QE{\nu_1}{\psi \boxtimes \frac{\dd \nu_2}{\dd \nu_1} \bigg|\F(X)} = \frac{\dd \tilde\nu_1}{\dd \nu_1'}$$
which implies that~\eqref{bayeseq4} is equivalent to
$$\QCE{\nu_2}{\psi}{\F(X)}  \boxtimes \QE{\nu_1}{\frac{\dd \nu_2}{\dd \nu_1} \bigg| \F(X)} =
\QE{\nu_1}{\psi \boxtimes \frac{\dd \nu_2}{\dd \nu_1} \bigg|\F(X)}$$
as required.
 \end{proof}
 
\begin{remark}
In classical probability theory,  by taking the trivial $\sigma$-algebra $\F(X)=\{\emptyset, X\}$,  the  change of  measure theorem can be recovered
as a special case of Bayes' rule. However, to establish the same statement in the quantum case requires
$\QE{\nu}{\QE{\nu}{\psi}} = \QE{\nu}{\psi}$, which in general is not true (even for constant
$\psi$). 
\end{remark}

\subsection{Quantum conditional Jensen's inequality}\label{jensensect}
 
 \begin{theorem}[Quantum Conditional Jensen's Inequality] Assume that $\H$ has finite dimension, $\nu\in \povpm{X}{\H}$, and 
$J\subset\mathbb R$ is an open interval containing a closed interval
$[\alpha,\beta]$. If $\psi:X\rightarrow\B(\H)$ is a quantum random variable for which $\psi^*=\psi$
and the eigenvalues of every $\psi(x)$ are contained within $[\alpha,\beta]$, then
$$\QCE{\nu}{\vartheta(\psi)}{\F(X)}  \ge \vartheta\left(\QCE{\nu}{\psi}{\F(X)} \right)$$
for every operator convex function $\vartheta:J\rightarrow\mathbb R$.
\end{theorem}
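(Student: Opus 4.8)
The plan is to show that the quantum conditional expectation $\QCE{\nu}{\,\cdot\,}{\F(X)}$, evaluated at a point $x$, is implemented by a \emph{unital completely positive} map that depends measurably on $x$, and then to invoke the operator Jensen inequality for unital completely positive maps (Davis; Choi; Hansen--Pedersen). Write $\mu=\mu_\nu$ and $g=\dd\nu/\dd\mu$, let $\QCE{\mu}{\,\cdot\,}{\F(X)}$ denote \emph{classical} conditional expectation (which makes sense entrywise for operator valued integrands since $\H$ is finite dimensional), and set $G_0:=\QCE{\mu}{g}{\F(X)}$, an $\F(X)$-measurable positive-operator-valued function. Combining Theorem~\ref{condexp}, which identifies $\QCE{\nu}{\psi}{\F(X)}$ with the Radon--Nikod\'ym derivative $\dd\tilde\nu/\dd\nu'$ where $\tilde\nu(E)=\QE{\nu}{\psi\ch{E}}$ and $\nu'=\nu|_{\F(X)}$, with the explicit formula~\eqref{rn-deriv} (identifying the various induced measures and using $\tilde\nu(E)=\int_E g^{1/2}\psi g^{1/2}\,\dd\mu$), a direct computation yields
\[
\QCE{\nu}{\psi}{\F(X)}(x)\,=\,G_0(x)^{-1/2}\,\QCE{\mu}{g^{1/2}\psi g^{1/2}}{\F(X)}(x)\,G_0(x)^{-1/2}
\]
for $\mu$-almost every $x$.

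For fixed $x$ the map $\Psi_x$ sending a bounded quantum random variable $\psi$ to the right-hand side above is a composition of the two conjugations $\psi\mapsto g^{1/2}\psi g^{1/2}$ and $z\mapsto G_0(x)^{-1/2}zG_0(x)^{-1/2}$ with the classical conditional expectation $\QCE{\mu}{\,\cdot\,}{\F(X)}$; each factor is completely positive, so $\Psi_x$ is completely positive, and $\Psi_x(1)=G_0(x)^{-1/2}G_0(x)G_0(x)^{-1/2}$ is the identity on the range of $G_0(x)$, so $\Psi_x$ is unital there. Granting this, the conclusion is quick: regard $\psi$ as a selfadjoint element of the $\text{C}^*$-algebra of bounded quantum random variables (modulo $\mu$-null sets), whose spectrum lies in $[\alpha,\beta]\subset J$, so that the functional calculus of this element is the quantum random variable $x\mapsto\vartheta(\psi(x))=\vartheta(\psi)$. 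Since $\vartheta$ is operator convex and $\Psi_x$ is unital completely positive, the operator Jensen inequality gives $\vartheta\bigl(\Psi_x(\psi)\bigr)\le\Psi_x\bigl(\vartheta(\psi)\bigr)$ for $\mu$-almost every $x$. But $\Psi_x(\psi)=\QCE{\nu}{\psi}{\F(X)}(x)$, and applying the same formula to the selfadjoint bounded quantum random variable $\vartheta(\psi)$ gives $\Psi_x(\vartheta(\psi))=\QCE{\nu}{\vartheta(\psi)}{\F(X)}(x)$; hence $\vartheta\bigl(\QCE{\nu}{\psi}{\F(X)}(x)\bigr)\le\QCE{\nu}{\vartheta(\psi)}{\F(X)}(x)$ for $\mu$-almost every, hence $\nu$-almost every, $x$ (recall $\nu$ and $\mu$ have the same null sets, as $\mu=\frac1d\tr\circ\nu$ and $\nu\geq 0$), which is the assertion.

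Two points need care. Theorem~\ref{condexp} is stated for $\psi:X\to\B(\H)_+$ with $\QE{\nu}{\psi}\neq0$, whereas here $\psi$ is only selfadjoint; I would handle this by writing $\psi=\psi_+-\psi_-$ with $\psi_\pm:X\to\B(\H)_+$ bounded quantum random variables (and $\vartheta(\psi)$ likewise), discarding a trivial summand if some $\QE{\nu}{\psi_\pm}$ vanishes, and using that the displayed formula -- and the quantum conditional expectation -- is real-linear in scalar coefficients (statement~\eqref{qe1} of Theorem~\ref{properties of QE} with $\varrho_1,\varrho_2$ real scalars). One cannot instead merely shift $\psi$ by a multiple of the identity, since $\QCE{\nu}{1}{\F(X)}$ need not equal $1$.

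I expect the genuine technical work to lie in the generalised-inverse bookkeeping when $G_0(x)$ -- equivalently $g(x)=\dd\nu/\dd\mu(x)$ -- fails to be invertible: then $\Psi_x$ is unital only on the range of $G_0(x)$, and one must check that $\QCE{\nu}{\psi}{\F(X)}(x)$ and $\QCE{\nu}{\vartheta(\psi)}{\F(X)}(x)$ are supported on that range, so that the operator Jensen inequality applies within the corresponding compression. Should this prove awkward, an alternative is to establish the inequality first for sub-$\sigma$-algebras $\F(X)$ generated by a countable measurable partition $\{A_i\}$, where $\QCE{\nu}{\psi}{\F(X)}$ is constant on each $A_i$ and equals $\nu(A_i)^{-1/2}\QE{\nu}{\psi\ch{A_i}}\nu(A_i)^{-1/2}$, manifestly a unital completely positive image of $\psi$, and then pass to a general $\F(X)$ by a martingale-type approximation.
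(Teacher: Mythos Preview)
Your approach is correct and genuinely different from the paper's. The paper argues in a few lines: it applies the unconditional Jensen inequality (Theorem~\ref{properties of QE}\eqref{qe6}, from~\cite{FZ}) over each $E\in\F(X)$ to obtain $\int_E\vartheta(\psi)\,\dd\nu\ge\vartheta\bigl(\int_E\psi\,\dd\nu\bigr)$, rewrites both sides via the defining property of conditional expectation as $\int_E\omega\,\dd\nu\ge\vartheta\bigl(\int_E\varphi\,\dd\nu\bigr)$ with $\omega=\QCE{\nu}{\vartheta(\psi)}{\F(X)}$ and $\varphi=\QCE{\nu}{\psi}{\F(X)}$, and then passes to the pointwise conclusion $\omega\ge\vartheta(\varphi)$. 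You instead work pointwise from the outset: you derive the explicit formula $\varphi=G_0^{-1/2}\,\QCE{\mu}{g^{1/2}\psi g^{1/2}}{\F(X)}\,G_0^{-1/2}$ (which follows just as you say, since $\dd\nu'/\dd\mu'=\QCE{\mu}{g}{\F(X)}=G_0$ and $\tilde\nu(E)=\int_E g^{1/2}\psi g^{1/2}\,\dd\mu$), recognise the right-hand side at each $x$ as a unital completely positive map $\Psi_x$ from $L^\infty(X,\mu)\otimes\B(\H)$ to $\B(\H)$, and invoke the Davis--Choi/Hansen--Pedersen operator Jensen inequality to get $\vartheta(\Psi_x(\psi))\le\Psi_x(\vartheta(\psi))$ directly. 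What your route buys is honest pointwise control: the paper's last implication---from $\int_E\omega\,\dd\nu\ge\vartheta\bigl(\int_E\varphi\,\dd\nu\bigr)$ for all $E\in\F(X)$, where the right-hand side is $\vartheta$ of an integral rather than $\int_E\vartheta(\varphi)\,\dd\nu$, to the pointwise $\omega\ge\vartheta(\varphi)$---is not immediate as written, whereas you establish exactly the desired inequality at each $x$. The paper's route, on the other hand, is short and reuses its own Jensen inequality as a black box. One minor remark: your formula gives $\Psi_x(1)=1$ wherever $G_0(x)$ is invertible, so $\QCE{\nu}{1}{\F(X)}=1$ off the singular set and the shift-by-$\alpha\cdot 1$ trick would work there as well; your caution about $\QCE{\nu}{1}{\F(X)}\neq 1$ is only relevant in the generalised-inverse regime you already flag.
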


\begin{proof}
Suppose that $\omega =\QCE{\nu}{\vartheta(\psi)}{\F(X)} $ so that
$$\int_E \omega \d \nu = \int_E \vartheta(\psi) \d \nu$$
for every $E \in \F(X)$.  
Since $\vartheta$ is operator convex, we know from~\cite{FZ} that
\begin{equation}\label{jenseneq1}
\int_E \vartheta(\psi) \d \nu \ge \vartheta \left( \int_E \psi \d \nu \right).
\end{equation}
However, if $\varphi = \QCE{\nu}{\psi}{\F(X)}$ so that 
$$\int_E \varphi \d \nu = \int_E \psi \d \nu$$
for every $E \in \F(X)$, then~\eqref{jenseneq1} implies
$$ \vartheta \left( \int_E \psi \d \nu \right)= 
 \vartheta \left( \int_E \varphi \d \nu \right) $$
   for every $E \in \F(X)$. In other words, we have shown that
$$\int_E \omega \d \nu \ge  \vartheta \left( \int_E \varphi \d \nu \right)$$
  for every $E \in \F(X)$ which implies that
  $$\QCE{\nu}{\vartheta(\psi)}{\F(X)}  \ge \vartheta\left(\QCE{\nu}{\psi}{\F(X)} \right)$$
as required.
\end{proof}

\section{Computing the quantum conditional expectation: an example}\label{explicit}

Assume that $X=\{x_1, x_2, \dots, x_n\}$ so that $\borel{X}$ is the power set of $X$. Consider the sub-$\sigma$-algebra 
$\F(X) = \{\emptyset, \{x_1\}, \{x_2, \ldots, x_n\}, X\}$. 
Choose $\nu \in \povpm{X}{\H}$ with support $X$; thus  $h_j = \nu(\{x_j\})$ is a nonzero positive operator for every
$j=1, \dots, n$. To simplify the discussion, we also assume each $h_j$ is invertible.
Suppose that $\psi: X \to \B(\H)_+$ is a quantum random variable such that $\QE{\nu}{\psi} \neq0$. 
Our aim is  to compute  $\QCE{\nu}{\psi}{\F(X)}$, the quantum conditional expectation of $\psi$ with respect to $\F(X)$ relative to $\nu$. 

Let $\varphi = \QCE{\nu}{\psi}{\F(X)}$ 
so that $\varphi: (X, \F(X)) \to \B(H)$ is an $\F(X)$-measurable quantum random variable with the property that 
$\QE{\nu}{\psi\ch{E}} = \QE{\nu}{\varphi\ch{E}}$
for every $E \in \F(X)$.   Theorem~\ref{condexp} asserts that the quantum random variable $\varphi$ is a particular Radon-Nikod\'ym derivative, which we will now determine explicitly.  
Let $\nu' = \nu|_{\F(X)}$ be the restriction of $\nu$ to $\F(X)$
so that
$\nu'(\{x_1\}) = \nu(\{x_1\}) = h_1$
and
 \begin{equation}\label{ex1oct30eq2}
 \nu'(\{x_2,\ldots, x_n\}) = \nu(\{x_2,\ldots, x_n\}) = \sum_{j=2}^n \nu(\{x_j\}) = \sum_{j=2}^n h_j =1-h_1\,.
 \end{equation}
With
$\mu' = (1/d)\tr \circ \nu' $ we obtain
 \begin{equation}\label{ex1eqA}
\frac{\dd\nu'}{\dd\mu'} = d \left( \frac{\ch{\{x_1\}}} {\tr(h_1)}    \right) h_1 + d \left( \frac{\ch{\{x_2,\ldots, x_n\}}}{\tr(1-h_1)} \right) (1-h_1)\,.
\end{equation}
Define a measure $\tilde\nu:\F(X) \to \B(\H)$ by setting 
$\tilde\nu(E) = \int_E \psi \d \nu'$
for $E \in \F(X)$.  Thus, $\varphi =\QCE{\nu}{\psi}{\F(X)} = \dd \tilde\nu/\dd \nu'$, and our goal
is to compute the value of $\varphi(x_k)$ for $k=1,2,\ldots,n$.
Now with $E = \{x_2, \ldots, x_n\}$, equation~\eqref{ex1eqA} yields
\begin{equation}\label{ex1eqB}
\left( \ch{E}\frac{\dd \nu'}{\dd\mu'}\right)^{1/2} = \ch{E} \sqrt{\frac{d}{\tr(1-h_1) }} (1-h_1)^{1/2}.
\end{equation}
We now claim that there exists an $a \in \B(\H)_+$ such that $\varphi(E) = \{a\}$ which implies that
\begin{equation}\label{claim}
\varphi(x_2) = \cdots = \varphi(x_n) = a.
\end{equation}
To prove the claim, first observe that if a function $\gamma:X \to \R$ has the property that $\gamma(E)$ contains at least three distinct points, then $\gamma$ is not $\F(X)$-measurable. The reason is that if $\alpha_1$, $\alpha_2$, and $\alpha_3$ are distinct values of $\gamma$ on $E$, then $\gamma^{-1}([\alpha_1,\infty))$, $\gamma^{-1}([\alpha_2,\infty))$, and $\gamma^{-1}([\alpha_3,\infty))$ are three distinct subsets of $\F(X)$ different from $\emptyset$ and $X$.  However, $\F(X)$ has only two such subsets.
Suppose now that $a_1, a_2, a_3 \in \B(\H)_+$ are three distinct values of $\varphi$. Thus, because $a_1 \neq a_2$, there is a density operator $\rho \in \state{\H}$ such that $\tr(\rho a_1) \neq \tr(\rho a_2)$. If it is not already true that $\tr(\rho a_3)$ differs from both $\tr(\rho a_1)$ and $\tr(\rho a_2)$, then by the facts that $\state{\H}$ is convex and the functional $a \mapsto \tr(\rho a)$ is continuous, we may perturb $\rho$ slightly in $\state{\H}$ to produce a new density operator, denoted by $\rho$ again, so that 
$\tr(\rho a_1)$, $\tr(\rho a_2)$, and $\tr(\rho a_3)$ are distinct.  But because the map $X \to \R$ defined by $x \mapsto \tr(\rho^{1/2} \psi(x) \rho^{1/2})$ is $\F(X)$-measurable, the previous paragraph tells us that such functions must have at most two values. Hence, $\psi$ is not a quantum random variable on $(X, \F(X))$, which is a contradiction, and thereby proves the claim.
Consequently, for every $\rho \in \state{\H}$, we have
\begin{align*}
 \int_X \ch{E} &\tr \left( \rho \left( \ch{E}\frac{\dd \nu'}{\dd\mu'}\right)^{1/2} \varphi \left( \ch{E}\frac{\dd \nu'}{\dd\mu'}\right)^{1/2} \right) \d \mu'\\
&\qquad\qquad=\sqrt{\frac{d}{\tr(1-h_1) }}\int_X \ch{E} \tr\left(\rho (1-h_1)^{1/2} a (1-h_1)^{1/2}\right) \d \mu'\\
&\qquad\qquad=\tr\left(\rho (1-h_1)^{1/2} a (1-h_1)^{1/2} \right),
\end{align*}
and so we conclude that
\begin{equation}\label{ex1eq6}
\int_{E} \varphi \d \nu' = \int_X \ch{E} \varphi \d \nu' = (1-h_1)^{1/2} a (1-h_1)^{1/2}.
\end{equation}
Because
\begin{equation}\label{ex1eq7}
\int_{E} \varphi \d \nu' = \int_{E} \psi \d \nu' = \sum_{j=2}^n h_j^{1/2} \psi(x_j) h_j^{1/2},
\end{equation}
combining~\eqref{ex1eq6} and~\eqref{ex1eq7}  leads to
$$\QCE{\nu}{\psi}{\F(X)}(x_k) =  \frac{\dd \tilde\nu}{\dd \nu'}(x_k)= \varphi(x_k) =\left(\sum_{j=2}^n h_j \right)^{-1/2}\sum_{j=2}^n h_j^{1/2} \psi(x_j) h_j^{1/2}\left(\sum_{j=2}^n h_j \right)^{-1/2}$$
for $k=2,3,\ldots, n$.

\begin{remark}
Probabilistically, these conditional expectation formul\ae\ do not come as a surprise since  we think of $\QCE{\nu}{\psi}{\F(X)}$ as our ``best guess'' for $\psi$ knowing that $\F(X)$ has happened. Hence,  $\QCE{\nu}{\psi}{\F(X)}(x_1)$, which represents our best guess for $\psi$ knowing that $\{x_1\}$ has happened, is obviously equal to $\psi(x_1)$. On the other hand, if $\{x_2, \ldots, x_n\}$ has happened, then the only information we have is that $\{x_1\}$ has not happened.  Since we have no other information as to which value from $\{x_2, \ldots, x_n\}$ has happened, our best guess for $\psi$ given that  $\{x_2, \ldots, x_n\}$ has happened is the quantum weighted average of the values $\{x_2, \ldots, x_n\}$, namely
$$\left(\sum_{j=2}^n h_j \right)^{-1/2}\sum_{j=2}^n h_j^{1/2} \psi(x_j) h_j^{1/2}\left(\sum_{j=2}^n h_j \right)^{-1/2}.$$
\end{remark}
  
\section*{Acknowledgement}
The work of the authors is supported, in part, by the Natural Sciences and Engineering Research Council of Canada.
The second author thanks the Australian National University for its hospitality during his visit from 
January to May 2011 to the Mathematical Sciences Institute where much of the background material 
for the present paper was learned.


\begin{thebibliography}{00}

\bibitem{farenick2011b}
Douglas Farenick. 
\newblock Arveson's criterion for unitary similarity.
\newblock {\em Linear Algebra Appl.}, 435:769--777, 2011.

\bibitem{FPS}
Douglas Farenick, Sarah Plosker, and Jerrod Smith. 
\newblock Classical and nonclassical randomness in quantum measurements.
\newblock {\em J. Math. Phys.}, 52:122204, 2011.

\bibitem{farenick1992}
D.R.~Farenick. 
\newblock {C$^*$}-convexity and matricial ranges.
\newblock {\em Canad. J. Math.}, 44:280--297, 1992.

\bibitem{FZ}
Douglas R.~Farenick and Fei Zhou. 
\newblock Jensen's inequality relative to matrix-valued measures.
\newblock {\em J. Math. Anal. Appl.}, 327:919--929, 2007.

\bibitem{fuchs--schack2004}
Christopher A.~Fuchs and R\"udiger Schack. 
\newblock Unknown quantum states and operations, a Bayesian view.
\newblock {\em Lecture Notes in Physics}, 649:147--187, 2004.

\bibitem{holevo}
Alexander S. Holevo. 
\newblock {\em Statistical Structure of Quantum Theory}.
\newblock Springer, Berlin, 2001.

\bibitem{kl2004}
Andrei Y.~Khrennikov and Elena R.~Loubenets.
\newblock On relations between probabilities under quantum and classical measurements.
\newblock {\em Found. Phys.}, 34:689--704, 2004. 

\bibitem{kraus}
Karl Kraus.
\newblock {\em States, Effects, and Operations}.
\newblock Springer, Berlin, 1983.

\bibitem{kribs2003}
David W.~Kribs.
\newblock Quantum channels, wavelets, dilations and representations of $\mathcal O_n$.
\newblock {\em Proc. Edinb. Math. Soc.} 46:421--433, 2003.
 
\bibitem{KuboAndo}
Fumio Kubo and Tsuyoshi Ando.
\newblock Means of positive linear operators.
\newblock {\em Math. Ann.}, 246:205--224, 1980.

\bibitem{LS2011a}
M.S.~Leifer and R.W.~Spekkens. 
\newblock Formulating quantum theory as a causally neutral theory of Bayesian inference.
\newblock Preprint, 2011.  Available online at \texttt{arXiv:1107.5849}.

\bibitem{lindblad1999}
G\"oran Lindblad.
\newblock A general no-cloning theorem. 
\newblock {\em Letters Math. Phys.}, 47:189--196, 1999.

\bibitem{nielsen}
Michael A. Nielsen and Isaac L. Chuang.
\newblock {\em Quantum Computation and Quantum Information}.
\newblock Cambridge University Press, Cambridge, 2000.

\bibitem{petz}
D\'enes Petz.
\newblock {\em Quantum Information Theory and Quantum Statistics}.
\newblock Springer, Berlin, 2008.

\bibitem{Pusz}
W. Pusz and S.L. Woronowicz.
\newblock Functional Calculus for sesquilinear forms and the purification map. 
\newblock {\em Rep. Math. Phys.}, 8:159--170, 1975.

\bibitem{SBC}
R\"udiger Schack, Todd A.~Brun, and Carlton M.~Caves.
\newblock Quantum Bayes rule.
\newblock {\em Phys. Rev. A}, 64:014305, 2001.

\bibitem{vedral}
Vlatko Vedral.
\newblock {\em Introduction to Quantum Information Science}.
\newblock Oxford University Press, New York, 2006.


\end{thebibliography}
\end{document}